\def\0{\phantom{0}}
\def\d{\partial}
\def\twoVec(#1,#2){\left(\begin{matrix}#1\\#2\end{matrix}\right)}
\def\grad{\nabla}
\def\div{\nabla\cdot}
\def\Div{\textrm{div}}
\def\Curl{\textrm{curl}}
\def\Grad{\textrm{grad}}
\def\spn{\textrm{span}}
\def\Re{\mathbb R}
\def\Po{\mathbb P}
\def\Supp{\mathbb S}
\def\Bu{\mathbb B}
\def\a{{\mathbf a}}
\def\F{{\mathbf F}}
\def\V{{\mathbf V}}
\def\v{{\mathbf v}}
\def\u{{\mathbf u}}
\def\x{{\mathbf x}}
\def\bfpsi{\pmb\psi}
\def\bfsigma{\pmb\sigma}
\def\cA{{\cal A}}
\def\cC{{\cal C}}
\def\cDS{{\cal{DS}}}
\def\cI{{\cal I}}
\def\cL{{\cal L}}
\def\cN{{\cal N}}
\def\cO{{\cal O}}
\def\cP{{\cal P}}
\def\cS{{\cal S}}
\def\cT{{\cal T}}
\def\myStrut{\vphantom{\int^H}}
\def\Line(#1,#2)(#3,#4){\qbezier(#1,#2)(#1,#2)(#3,#4)}
\def\longlongrightarrow%
\def\hooklongrightarrow{\DOTSB\lhook\joinrel\longrightarrow}
\title{Direct Serendipity and Mixed Finite Elements\\on Convex Quadrilaterals%
\thanks{This work was supported by the U.S.~National Science Foundation under grants DMS-1418752
and DMS-1720349.}}
\author{
  Todd Arbogast\thanks{University of Texas at Austin;
  Department of Mathematics, C1200; Austin, TX 78712-1202 and
  Institute for Computational Engineering and Sciences, C0200;
  Austin, TX 78712-1229 (arbogast@ices.utexas.edu)}
\and
 Zhen Tao\thanks{University of Texas at Austin;
 Institute for Computational Engineering and Sciences, C0200;
 Austin, TX 78712--1229 (taozhen.cn@gmail.com)}
}
\begin{document}

\maketitle

\date{\today}

\begin{abstract}
  The classical serendipity and mixed finite element spaces suffer from poor approximation on
  nondegenerate, convex quadrilaterals.  In this paper, we develop \emph{direct serendipity} and
  \emph{direct mixed} finite element spaces, which achieve optimal approximation properties and have
  minimal local dimension.  The set of local shape functions for either the serendipity or mixed
  elements contains the full set of scalar or vector polynomials of degree $r$, respectively,
  defined directly on each element (i.e., not mapped from a reference element).  Because there are
  not enough degrees of freedom for global $H^1$ or $H(\textrm{div})$ conformity, exactly two
  supplemental shape functions must be added to each element.  The specific choice of supplemental
  functions gives rise to different families of direct elements. These new spaces are related
  through a de Rham complex.  For index $r\ge1$, the new families of serendipity spaces $\cDS_{r+1}$
  are the precursors under the curl operator of our direct mixed finite element spaces $\V_r$, which
  can be constructed to have full or reduced $H(\textrm{div})$ approximation properties.  One choice
  of direct serendipity supplements gives the precursor of the recently introduced Arbogast-Correa
  spaces [SIAM J.\ Numer.\ Anal., 54 (2016), pp.~3332--3356]. Other \emph{fully} direct serendipity
  supplements can be defined without the use of mappings from reference elements, and these give
  rise in turn to \emph{fully} direct mixed spaces.  Numerical results are presented to illustrate
  the properties of the new spaces.
\end{abstract}

\begin{keywords}
  serendipity, mixed, finite elements, convex quadrilaterals, optimal approximation, finite element
  exterior calculus
\end{keywords}

\begin{AMS}
65N30, 65N12, 65D05
\end{AMS}


\pagestyle{myheadings} \thispagestyle{plain}
\markboth{Todd Arbogast and Zhen Tao}
{Direct Serendipity and Mixed Finite Elements}


\section{Introduction}\label{sec:Intro}

On a rectangle $\hat E$, serendipity finite elements $\cS_r(\hat E)$
\cite{Strang_Fix_1973,Ciarlet_1978,Brenner_Scott_1994} and Brezzi-Douglas-Marini mixed finite
elements BDM$_r(\hat E)$ \cite{BDM_1985} appear in the periodic table of the finite elements as
given by Arnold and Logg \cite{Arnold_Logg_2014_periodicTable} (where they are denoted
$\cS_r\Lambda^0$ and $\cS_r\Lambda^1$, respectively). They should be studied together, since they
are related by a de Rham complex \cite{Arnold_Falk_Winter_2006, Arnold_2013, AFW_2010_feec}
\begin{equation}\label{eq:deRhamBDM}
\Re \hooklongrightarrow \cS_{r+1}(\hat E) \overset{\Curl\,}{\longlongrightarrow}
 \textrm{BDM}_r(\hat E) \overset{\Div\,}{\longlongrightarrow} \Po_{r-1}(\hat E) \longrightarrow 0,
\end{equation}
which implies that $\textrm{BDM}_r(\hat E)=\Curl\,\cS_{r+1}(\hat E)\oplus\x\Po_{r-1}(\hat E)$, where
$\Po_{s}(\Hat E)$ are polynomials of degree $s$.  Over a rectangular mesh, the serendipity elements
merge into $H^1$ conforming spaces of scalar functions, and the BDM elements merge into
$H(\Div)=\big\{\v\in(L^2)^2:\div\v\in L^2\big\}$ conforming spaces of vector functions.  In this
paper, we define new (we call them \emph{direct}) serendipity and mixed finite elements on a general
nondegenerate, convex quadrilateral $E$.  These new elements generalize the complex
\eqref{eq:deRhamBDM}, and they maintain optimal order approximation properties while possessing
minimal local dimension.

The serendipity finite elements on rectangles $\cS_r(\hat E)$, especially the 8-node biquadratic
($r=1$) and the 12-node bicubic ($r=2$) ones, have been well studied for many years. They appear in
almost any introductory reference on finite elements,
e.g.,~\cite{Strang_Fix_1973,Ciarlet_1978,Brenner_Scott_1994}, and they are provided by software
packages both in academia~\cite{DHJKLLS_2003_fenics} and
industry~\cite{Hibbitt_Karlsson_Sorensen_2001_abaqus}.  Compared with the full tensor product
Lagrange finite elements $\Po_{r,r}(\hat E)$, serendipity finite elements use fewer degrees of
freedom, and they are usually more efficient.  It was not until recently, however, that a general
definition of the serendipity finite element spaces of arbitrary order on rectangles in any space
dimension was given by Arnold and Awanou~\cite{Arnold_Awanou_2011, Arnold_Awanou_2014} (see also
\cite{Gillette_Kloefkorn_2018_trimmed}).

The serendipity finite element spaces work very well on computational meshes of rectangular
elements, but it is well known that their performance is degraded on quadrilaterals when the space
is mapped from a rectangle, when $r\ge2$.  This is not the case for tensor product Lagrange finite
elements~\cite{Lee_Bathe_1993,Kaliakin_2001,ABF_2002}.  To be more precise, mapped serendipity
elements of index $r$ do not approximate to optimal order $r+1$ on $E$, but the image of the full
space of tensor product polynomials $\Po_{r,r}(\hat E)$ maintains accuracy on $E$.  We note that
Rand, Gillette, and Bajaj \cite{Rand_Gillette_Bajaj_2014} recently introduced a new family of
Serendipity finite elements based on generalized barycentric coordinates of index $r=2$ that is
accurate to order three on any convex, planar polygon.  A generalization to any order of approximation
was given by Floater and Lai \cite{Floater_Lai_2016}, but on quadrilaterals, they require
$\dim\Po_r+r$ shape functions, which is more than the minimal required when $r>2$.

There are many families of mixed finite elements on rectangles, beginning with those of Raviart and
Thomas \cite{Raviart_Thomas_1977} and generalized by N\'ed\'elec \cite{Nedelec_1980}.  These and the
BDM$_r$ finite elements are extended to quadrilaterals using the Piola transform \cite{Thomas_1977,
  Raviart_Thomas_1977}.  For most spaces, this creates a consistency error and consequent loss of
approximation of the divergence \cite{Thomas_1977, Brezzi_Fortin_1991, ABF_2005,
  Boffi_Brezzi_Fortin_2013, Arbogast_Correa_2016}.

The construction of mixed finite elements on quadrilaterals that maintain optimal order accuracy is
considered in many papers.  Most address only low order cases (see, e.g., \cite{Shen_1994,
  Shen_1992_phd, Boffi_Kikuchi_Schoberl_2006, Bochev_Ridzal_2008, Duan_Liang_2004,
  Kwak_Pyo_2011}). The exceptions we are aware of are the families of finite elements of Arnold,
Boffi, and Falk (ABF$_r$(E))~\cite{ABF_2005}, Siqueira, Devloo, and Gomes
\cite{Siqueira_Devloo_Gomes_2013}, and Arbogast and Correa (AC$_r(E)$ and AC$_r^{\textrm{red}}(E)$)
\cite{Arbogast_Correa_2016}.  The ABF elements are defined for rectangles and extended to
quadrilaterals in the usual way (i.e., by mapping via the Piola transformation). They rectify the
problem of poor divergence approximation by including more degrees of freedom in the space, so that
approximation properties are maintained after Piola mapping.  The spaces of
\cite{Siqueira_Devloo_Gomes_2013} also involve the Piola map, but in a unique way.  They also add
shape functions to their space to obtain accuracy. The AC elements use a different strategy.  These
elements are defined by using vector polynomials directly on the element (i.e., without being
mapped) and supplemented by two vector shape functions defined on a reference square and mapped via
Piola.  The AC spaces have minimal local dimension.

In this paper, we introduce new families of \emph{direct} serendipity and mixed finite elements that
have optimal approximation properties and maintain minimal local dimension. They are \emph{direct}
in the sense that the shape functions contain a full set of polynomials defined directly on the
element, as in the AC spaces.  Because there are not enough degrees of freedom to achieve $H^1$ or
$H(\Div)$ conformity over meshes of quadrilaterals, two supplemental functions need to be added to
each element, as is done for the AC spaces.

The families of direct serendipity elements have the same number of degrees of freedom as the
corresponding classical serendipity element, and they take the form
\begin{equation}\label{eq:serendipityForm}
\cDS_r(E) = \Po_r(E)\oplus\Supp_r^\cDS(E),\quad r\geq2.
\end{equation}
Each family is defined by the choice of the two supplemental functions spanning $\Supp_r^\cDS(E)$.  We
give a very general and explicit construction for these supplements.  They can be defined directly
on $E$, or they can be defined on $\hat E$ and mapped to $E$.

There are two classes of families of direct mixed elements, which correspond to full and reduced
$H(\Div)$-approximation.  For index $r$, a vector function is approximated to order $r+1$ accuracy,
but the divergence of the vector is approximated to order $r$ or $r-1$ for full and reduced
$H(\Div)$-approximation, respectively.  Each class of direct mixed elements has the same optimal
number of degrees of freedom as the AC elements of that class.  They take a form similar to
\eqref{eq:serendipityForm}, which is
\begin{equation}
\V_r^{\textrm{red}}(E) = \Po_r^2(E)\oplus\Supp_r^\V(E),\quad
\V_r^{\textrm{full}}(E) = \V_r^{\textrm{red}}(E)\oplus\x\tilde\Po_r(E),\quad r\geq1,
\end{equation}
where $\tilde\Po_r$ are homogeneous polynomials of degree~$r$.  Again, each family is defined by the
choice of the two supplemental functions spanning $\Supp_r^\V(E)$.

The serendipity and mixed families are related by de Rham theory:
\begin{equation}
\Curl\,\Supp_{r+1}^\cDS(E)=\Supp_r^\V(E).
\end{equation}
We define one family of direct serendipity elements that is the precursor of the full and reduced AC
spaces.  We also define many \emph{fully} direct serendipity elements that use no mappings to define
$\Supp_r^{\cDS}(E)$, which in turn generate new full and reduced \emph{fully} direct mixed spaces
that use no mappings whatsoever. Moreover, a second de Rham complex involving the gradient and curl
operators provides new $H(\Curl)=\big\{\v\in(L^2)^2:\Curl\,\v\in L^2\big\}$ elements as well.

We set some basic notation in the next section.  For any index $r\ge2$, we construct new families of
direct serendipity elements in Sections~\ref{sec:DS}--\ref{sec:mappingSupp} for which the
supplements either do not or do involve mappings, respectively.  Through the de Rham theory, these
lead to the AC and new direct mixed elements in Section~\ref{sec:deRham}.  We discuss the stability
and convergence properties of the new elements in Section~\ref{sec:properties}, and give some
numerical results illustrating their performance in Section~\ref{sec:numerics}.  A summary of our
results and conclusions, as well as $H(\Curl)$ elements, are given in the final section.


\section{Some notation}\label{sec:notation}

Let $\Po_r(\omega)$ denote the space of polynomials of degree up to $r$ on $\omega\subset\Re^d$,
where $d=0$ (a point), $1$, or~$2$. Recall that
\begin{equation}\label{eq:dimPo}
\dim\Po_r(\Re^d) = \twoVec(r+d,d) = \frac{(r+d)!}{r!\,d!}.
\end{equation}
Let $\tilde\Po_r(\omega)$ denote the space of homogeneous polynomials of degree $r$ on $\omega$.  Then
\begin{equation}\label{eq:dimTildePo}
\dim\tilde\Po_r(\Re^d) = \twoVec(r+d-1,d-1) = \frac{(r+d-1)!}{r!\,(d-1)!}.
\end{equation}

Let the element $E\subset\Re^2$ be a closed, nondegenerate, convex quadrilateral.  By nondegenerate,
we mean that $E$ does not degenerate to a triangle, line segment, or point. Let the reference
element $\hat E$ be $[-1,1]^2$. Define the bilinear and bijective map $\F_{\!E}:\hat E\to E$ that
maps the vertices of $\hat E$ to those of $E$.  We identify ``vertical'' and ``horizontal'' pairs of
opposite edges and number them consecutively as shown in Figure~\ref{fig:numbering}.  Let $\nu_i$
denote the unit outer normal to edge $i$ (denoted $e_i$), $i=1,2,3,4$, and identify the vertices as
$\x_{v,13}=e_1\cap e_3$, $\x_{v,14}=e_1\cap e_4$, $\x_{v,23}=e_2\cap e_3$, and
$\x_{v,24}=e_2\cap e_4$.

\begin{figure}[ht]\centering
\setlength\unitlength{3.2pt}
\begin{picture}(46,32)(-6,-6)\small
%
\thicklines
\put(0,0){\line(0,1){20}}
\put(0,0){\line(1,0){20}}
\put(20,0){\line(0,1){20}}
\put(0,20){\line(1,0){20}}
\put(0,10){\circle*{1}}\put(0,10){\vector(-1,0){4}}
\put(10,0){\circle*{1}}\put(10,0){\vector(0,-1){4}}
\put(20,10){\circle*{1}}\put(20,10){\vector(1,0){4}}
\put(10,20){\circle*{1}}\put(10,20){\vector(0,1){4}}
\put(-4.7,8.5){\makebox(0,0){$\hat\nu_1$}}
\put(11.2,-5){\makebox(0,0){$\hat\nu_3$}}
\put(25,8.3){\makebox(0,0){$\hat\nu_2$}}
\put(8,25){\makebox(0,0){$\hat\nu_4$}}
\put(0,0){\circle*{1}}\put(-2,-2){\makebox(0,0){$(-1,-1)$}}
\put(20,0){\circle*{1}}\put(22,-2){\makebox(0,0){$(1,-1)$}}
\put(20,20){\circle*{1}}\put(22,22){\makebox(0,0){$(1,1)$}}
\put(0,20){\circle*{1}}\put(-2,22){\makebox(0,0){$(-1,1)$}}
\put(10,10){\makebox(0,0){$\hat E$}}
\put(1.7,14){\makebox(0,0){$\hat e_1$}}
\put(6.5,-1.6){\makebox(0,0){$\hat e_3$}}
\put(18.4,7){\makebox(0,0){$\hat e_2$}}
\put(14,21.6){\makebox(0,0){$\hat e_4$}}
\put(36,14){\makebox(0,0){$\overset{\text{\normalsize$\F_{\!E}$}}
{-\!\!\!-\!\!\!\longrightarrow}$}}
\end{picture}\qquad
\raisebox{6pt}{\begin{picture}(34.5,32)(-5,-6)\small
%
\thicklines
\put(0,0){\line(1,3){4}}
\put(0,0){\line(1,0){24}}
\put(24,0){\line(-1,4){4}}
\put(4,12){\line(4,1){16}}
\put(2,6){\circle*{1}}\put(2,6){\vector(-3,1){4.74}}
\put(12,0){\circle*{1}}\put(12,0){\vector(0,-1){5}}
\put(22,8){\circle*{1}}\put(22,8){\vector(4,1){4.85}}
\put(12,14){\circle*{1}}\put(12,14){\vector(-1,4){1.21}}
\put(-4,7.5){\makebox(0,0){$\nu_1$}}
\put(10.8,-5.5){\makebox(0,0){$\nu_3$}}
\put(28.4,9){\makebox(0,0){$\nu_2$}}
\put(10.8,20){\makebox(0,0){$\nu_4$}}
\put(0,0){\circle*{1}}\put(-1.8,-1.6){\makebox(0,0){$\x_{v,13}$}}
\put(24,0){\circle*{1}}\put(25.6,-1.6){\makebox(0,0){$\x_{v,23}$}}
\put(20,16){\circle*{1}}\put(21.6,17.6){\makebox(0,0){$\x_{v,24}$}}
\put(4,12){\circle*{1}}\put(2.4,13.6){\makebox(0,0){$\x_{v,14}$}}
\put(12,7){\makebox(0,0){$E$}}
\put(4.6,8.8){\makebox(0,0){$e_1$}}
\put(8,-1.2){\makebox(0,0){$e_3$}}
\put(21.5,4){\makebox(0,0){$e_2$}}
\put(16,16.5){\makebox(0,0){$e_4$}}
\end{picture}}
\caption{A reference element $\hat E=[-1,1]^{2}$ and quadrilateral $E$, with edges $\hat e_i$ and $e_i$,
outer unit normals $\hat\nu_i$ and $\nu_i$, and vertices $(-1,-1)$ and $\x_{v,13}$, etc., respectively.
\label{fig:numbering}}
\end{figure}
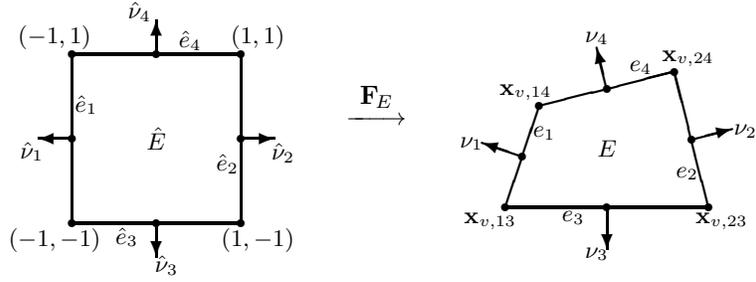

We define the linear polynomial $\lambda_i(\x)$ giving the distance of $\x\in\Re^2$ to edge~$e_i$
in the normal direction as
\begin{align}
\label{eq:lambda_i}
\lambda_i(\x) &= - (\x-\x_i)\cdot\nu_i, \quad i=1,2,3,4,
\end{align}
where $\x_i\in e_i$ is any point on the edge. If $\x$ is in the interior of $E$, these functions
are strictly positive, and each vanishes on the edge which defines it.

We denote by $F_{\!E}^0$ the map taking a function $\hat\phi$ defined on $\hat E$ to a function $\phi$
defined on $E$ by the rule
\begin{equation}\label{eq:defF0}
\phi(\x) = F_{\!E}^0(\hat\phi)(\x) = \hat\phi(\hat\x),
\end{equation}
where $\x=\F_{\!E}(\hat\x)$.  We denote by $\F_{\!E}^1$ the Piola map taking a vector function
$\hat\bfpsi$ defined on $\hat E$ to a vector function $\bfpsi$ defined on $E$ by the rule
\begin{equation}\label{eq:defF1}
\bfpsi(\x) = \frac1{J_E}DF_{\!E}(\hat\x)\,\hat\bfpsi(\hat\x),
\end{equation}
where $DF_{\!E}(\hat\x)$ is the Jacobian matrix of $\F_{\!E}$ and $J_E$ is its absolute determinant.

Recall Ciarlet's definition~\cite{Ciarlet_1978} of a finite element.

\begin{definition}[Ciarlet 1978]\label{defn:ciarlet}
Let 
\begin{enumerate}
\item[$1.$] $E\subset \Re^d$ be a bounded closed set with nonempty interior and a Lipschitz continuous
  boundary,
\item[$2.$] $\cP$ be a finite-dimensional space of functions on $E$, and
\item[$3.$] $\cN = \{ N_1, N_2,\ldots, N_k \}$ be a basis for $\cP'$.
\end{enumerate}
Then $(E, \cP, \cN)$ is called a \emph{finite element.}
\end{definition}

Our task is to define the \emph{shape functions} $\cP$ and the \emph{degrees of freedom} (DoFs)
$\cN$. The DoFs give a basis for $\cP'$ provided that we have unisolvence of the shape functions
(i.e., for $\phi\in\cP$, $N_j(\phi)=0$ for all $j$ implies that $\phi=0$). To achieve optimal
approximation properties, we will require that $\cP\supset\Po_r(E)$ for each index~$r$.  That is,
the polynomials will be directly included within the function space, and hence we call our new
finite elements \emph{direct serendipity} and \emph{direct mixed} elements.

Let $\Omega\subset\Re^2$ be a polygonal domain, and let $\cT_h$ be a conforming finite element
partition or mesh of $\Omega$ into nondegenerate, convex quadrilaterals of maximal diameter $h>0$.
To obtain approximation properties globally, we need to assume that the mesh is uniformly shape
regular \cite[pp.~104--105]{Girault_Raviart_1986}, which means the following.  For any $E\in\cT_h$,
denote by $T_i$, $i=1,2,3,4$, the subtriangle of $E$ with vertices being three of the four vertices
of $E$. Define the parameters
\begin{align}
\label{eq:hE}
h_E &= \text{diameter of }E, \\
\label{eq:rhoE}
\rho_E &= 2\,\min_{1\leq i\leq 4}\{ \text{diameter of largest circle inscribed in }T_i \}.
\end{align}
Uniform shape regularity of the meshes means that there exists $\sigma_*>0$ such that the ratio
$\displaystyle{\rho_E}/{h_E}\geq \sigma_*>0$ for all $E\in\cT_h$, where $\sigma_*$ is
independent of $\cT_h$.

The DoFs must be defined so that the shape functions on adjoining elements merge together.  For
serendipity spaces, we want the global space to reside in $H^1(\Omega)$, so the elements must merge
continuously across each edge $e$.  For mixed spaces, the vector variable must lie in
$H(\Div;\Omega)$, which means that the normal components (fluxes) of the vectors on an edge $e$ in
adjacent elements must be continuous.


\section{Fully direct serendipity elements in two space dimensions}\label{sec:DS}

It is shown in~\cite{ABF_2002} that when $d=2$, the convergence of the linear serendipity finite
element space ($r=1$) does not degenerate on quadrilaterals. The parametric serendipity element
$\cS_1(E)$ is the tensor product space of bilinear functions $\Po_{1,1}(\hat E)$ on $\hat E$ mapped
to $E$ by $F_{\!E}^0$, and, in fact,
\begin{align}\label{eq:S1}
\cS_1(E) &= \spn\{F_{\!E}^0(1),F_{\!E}^0(\hat x),F_{\!E}^0(\hat y),F_{\!E}^0(\hat x\hat y)\}\\
\nonumber
&= \spn\{1,x,y,F_{\!E}^0(\hat x\hat y)\}
= \Po_1(E)\oplus\spn\{F_{\!E}^0(\hat x\hat y)\}
\end{align}
has the form of a direct serendipity space. Therefore, we only develop our new direct serendipity
finite elements $\cDS_r(E)$ for indices $r \geq 2$.

Our dual objectives are that $\Po_r(E)\subset\cDS_r(E)$ and that shape functions on adjoining
elements merge continuously, i.e., so the space over $\Omega$ satisfies
$\cDS_r(\Omega)\subset H^1(\Omega)$.  These objectives require us to consider the lower dimensional
geometric objects within $E$ (as in \cite{Arnold_Awanou_2011}).  The minimal number of DoFs
associated to each lower dimensional object must correspond to the dimension of the polynomials that
restrict to that object.  These numbers are given in Table~\ref{tab:geometricDecomp}.  A
quadrilateral has 4 vertices, 4 edges, and one cell of dimension 0, 1, and 2, respectively. Each
vertex requires $\dim\Po(\Re^0)=1$ DoF, each edge requires $\dim\Po_{r-2}(\Re)=r-1$ DoFs (not
counting the vertices), and each cell requires $\dim\Po_{r-4}(\Re^2)=\twoVec(r-2,2)$ DoFs (not
counting the edges and vertices).  The total number of DoFs is then $D_r$, where
\begin{align*}
D_r = 4 + 4(r-1) + \frac12(r-2)(r-3) = \frac12(r+2)(r+1) + 2 = \dim\Po_r(E) + 2,
\end{align*}
and so to define $\cDS_r(E)$, we will supplement $\Po_r(E)\subset\cDS_r(E)$ with the span of two
functions.  We have many choices for the supplemental functions, the span of which is denoted
$\Supp_r^\cDS(E)$.  Each choice gives rise to a distinct family of direct serendipity elements of
index $r\geq2$; that is, the shape functions ($\cP$ in Definition~\ref{defn:ciarlet}) are
\begin{equation}\label{eq:cDS=P+S}
\cDS_r(E)=\Po_r(E)\oplus\Supp_r^\cDS(E).
\end{equation}
In this section, we develop supplemental spaces that are unmapped (i.e., these new serendipity
spaces are fully direct---even the supplements are defined directly on $E$).

\begin{table}[ht]
  \caption{
  Geometric decomposition and degrees of freedom (DoFs) associated to each
  geometric object of a quadrilateral for a serendipity element of index
  $r \geq 2$.\label{tab:geometricDecomp}}
\centerline{\begin{tabular}{ccccc}
Dimension & Object  & Object  & DoFs per  & Total  \\
                 &  Name &  Count &   Object &  DoFs \\
\hline
0 & vertex & 4 & 1 & 4\\
1 & edge & 4 & $r-1$ & $4(r-1)$ \\
2 & cell & 1 & $\frac12(r-2)(r-3)$ & $\frac12(r-2)(r-3)$\\
\end{tabular}}
\end{table}

We define the DoFs ($\cN$ in Definition~\ref{defn:ciarlet}) as a set of nodal functionals
$N_j$ defined at a nodal point $\x_{n,j}$, i.e.,
\begin{equation}\label{eq:nodalFunctionals}
\cN = \{N_j : N_j(\phi) = \phi(\x_{n,j})\text{ for all }\phi(\x),\ j=1,2,\ldots,D_r\}.
\end{equation}
As depicted in Figure~\ref{fig:nodalDoFs}, for vertex DoFs, the nodal points are exactly the
vertices $\x_{v,13}$, $\x_{v,14}$, $\x_{v,23}$, and $\x_{v,24}$ of~$E$.  For edge DoFs, the nodal
points plus vertices are equally distributed on each edge. There are $r-1$ nodal points on the
interior of each egde, which can be denoted $\x_{e_i,j}$, $j=1,\ldots,r-1$ for nodal points that lie
on edge $e_i$, $i=1,2,3,4$. The interior cell DoFs can be set, for example, on points of a
triangle~$T$ strictly inside $E$, where the set of nodal points is the same as the nodes of the
Lagrange element of order $r-4$ on the triangle~$T$.

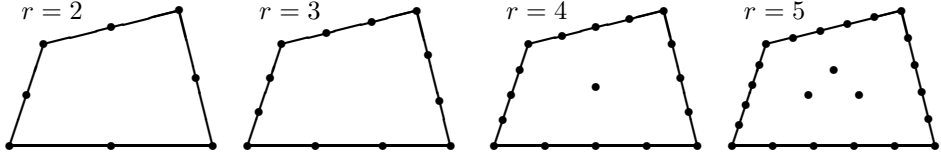
\begin{figure}[htbp]\centering{\setlength\unitlength{3.2pt}
\begin{picture}(25,18)(-0.5,-0.5)
\thicklines
\put(5,16){\makebox(0,0){$r=2$}}
\put(0,0){\line(1,3){4}}
\put(0,0){\line(1,0){24}}
\put(24,0){\line(-1,4){4}}
\put(4,12){\line(4,1){16}}
\put(2,6){\circle*{1}}
\put(12,0){\circle*{1}}
\put(22,8){\circle*{1}}
\put(12,14){\circle*{1}}
\put(0,0){\circle*{1}}
\put(24,0){\circle*{1}}
\put(20,16){\circle*{1}}
\put(4,12){\circle*{1}}
%
\end{picture}\quad
\begin{picture}(25,18)(-0.5,-0.5)
\thicklines
\put(5,16){\makebox(0,0){$r=3$}}
\put(0,0){\line(1,3){4}}
\put(0,0){\line(1,0){24}}
\put(24,0){\line(-1,4){4}}
\put(4,12){\line(4,1){16}}
\put(1.333,4){\circle*{1}}
\put(2.667,8){\circle*{1}}
\put(8,0){\circle*{1}}
\put(16,0){\circle*{1}}
\put(22.667,5.333){\circle*{1}}
\put(21.333,10.667){\circle*{1}}
\put(9.333,13.333){\circle*{1}}
\put(14.667,14.667){\circle*{1}}
\put(0,0){\circle*{1}}
\put(24,0){\circle*{1}}
\put(20,16){\circle*{1}}
\put(4,12){\circle*{1}}
%
\end{picture}
\quad
\begin{picture}(25,18)(-0.5,-0.5)
\thicklines
\put(5,16){\makebox(0,0){$r=4$}}
\put(0,0){\line(1,3){4}}
\put(0,0){\line(1,0){24}}
\put(24,0){\line(-1,4){4}}
\put(4,12){\line(4,1){16}}
\put(1,3){\circle*{1}}
\put(2,6){\circle*{1}}
\put(3,9){\circle*{1}}
\put(6,0){\circle*{1}}
\put(12,0){\circle*{1}}
\put(18,0){\circle*{1}}
\put(23,4){\circle*{1}}
\put(22,8){\circle*{1}}
\put(21,12){\circle*{1}}
\put(8,13){\circle*{1}}
\put(12,14){\circle*{1}}
\put(16,15){\circle*{1}}
\put(12,7){\circle*{1}}
\put(0,0){\circle*{1}}
\put(24,0){\circle*{1}}
\put(20,16){\circle*{1}}
\put(4,12){\circle*{1}}
%
\end{picture}\quad
\begin{picture}(25,18)(-0.5,-0.5)
\thicklines
\put(5,16){\makebox(0,0){$r=5$}}
\put(0,0){\line(1,3){4}}
\put(0,0){\line(1,0){24}}
\put(24,0){\line(-1,4){4}}
\put(4,12){\line(4,1){16}}
\put(0.8,2.4){\circle*{1}}
\put(1.6,4.8){\circle*{1}}
\put(2.4,7.2){\circle*{1}}
\put(3.2,9.6){\circle*{1}}
\put(4.8,0){\circle*{1}}
\put(9.6,0){\circle*{1}}
\put(14.4,0){\circle*{1}}
\put(19.2,0){\circle*{1}}
\put(20.8,12.8){\circle*{1}}
\put(21.6,9.6){\circle*{1}}
\put(22.4,6.4){\circle*{1}}
\put(23.2,3.2){\circle*{1}}
\put(7.2,12.8){\circle*{1}}
\put(10.4,13.6){\circle*{1}}
\put(13.6,14.4){\circle*{1}}
\put(16.8,15.2){\circle*{1}}
\put(12,9){\circle*{1}}
\put(9,6){\circle*{1}}
\put(15,6){\circle*{1}}
\put(0,0){\circle*{1}}
\put(24,0){\circle*{1}}
\put(20,16){\circle*{1}}
\put(4,12){\circle*{1}}
\end{picture}
}
\caption{ 
The nodal points for the DoFs of the direct serendipity finite element for small $r$.
\label{fig:nodalDoFs}}
\end{figure}


\subsection{Vertices}\label{sec:vertexDoFs}

For the vertices, $r\ge2$, so we can define the shape functions
\begin{equation}\label{eq:shape-vertices}
\begin{alignedat}2
\phi_{v,13}(\x) &= \lambda_2(\x)\lambda_4(\x),&\quad\phi_{v,14}(\x) &= \lambda_2(\x)\lambda_3(\x),\\
\phi_{v,23}(\x) &= \lambda_1(\x)\lambda_4(\x),&\quad\phi_{v,24}(\x) &= \lambda_1(\x)\lambda_3(\x).
\end{alignedat}
\end{equation}
These four functions are clearly linearly independent and unisolvent with respect to the vertex
DoFs.  All other shape functions will be defined so as to vanish at the vertices, so these four will
be independent of the rest.


\subsection{Interior cell}\label{sec:cell}

For the entire cell $E$, we need interior shape functions only when $r\ge4$ (recall
Table~\ref{tab:geometricDecomp}). We let the shape functions be defined by
\begin{equation}\label{eq:shape-cell}
\big\{\phi_{E,j}(\x) : j=1,\ldots,\tfrac12(r-2)(r-3)\big\} = \lambda_1\lambda_2\lambda_3\lambda_4\Po_{r-4}.
\end{equation}
These shape functions are linearly independent and vanish if the cell DoFs vanish.  Moreover, these
functions vanish on all four edges.  Therefore, if unisolvent shape functions can be defined for the
edge DoFs, then the set of all our shape functions will be unisolvent for the entire set of DoFs.


\subsection{Edges}\label{sec:edges}

We define distinct families of fully direct serendipity elements depending on the choice of the two
supplemental functions used.  These will be defined by a choice of four functions, which are
oriented ``horizontally'' or ``vertically,'' in the sense that their zero sets are horizontal or
vertical (as oriented by the bilinear map $\F_{\!E}$, see Figure~\ref{fig:numbering}).  Two of the
functions are linear polynomials, denoted $\lambda_H$ and $\lambda_V$. The other two functions
should be bounded, and they are denoted $R_V$ and $R_H$. The supplemental space is then defined as
\begin{equation}
\label{eq:supplementSpace}
\Supp_r^{\cDS}(E) = \spn\{\lambda_3\lambda_4\lambda_H^{r-2}R_V,\lambda_1\lambda_2\lambda_V^{r-2}R_H\}.
\end{equation}

The choice of the linear function $\lambda_H$ is based on the edges $e_1$ and $e_2$. As shown in
Figure~\ref{fig:lambda-h}, let $\cL_1$ and $\cL_2$ be the infinite lines containing the edges $e_1$
and $e_2$, respectively. When $e_1$ and $e_2$ are parallel, the only requirement for the choice of
$\lambda_H$ is that its zero line intersects both $\cL_1$ and $\cL_2$.  When $e_1$ and $e_2$ are not
parallel, $\cL_1$ and $\cL_2$ intersect in a point $\x_{12}$.  Then the only requirements for the
choice of $\lambda_H$ is that $\lambda_H(\x_{12})\ne0$ and that its zero line intersects both
$\cL_1$ and $\cL_2$ on the half-lines emanating from $\x_{12}$ and either containing $e_1$ and
$e_2$, respectively, or not containing $e_1$ and $e_2$, respectively (i.e., the zero line of
$\lambda_H$ intersects the lines containing $e_1$ and $e_2$ either above or below $\x_{12}$).
To be more precise in the case when $e_1$ and $e_2$ are not parallel, we
can expand $\lambda_H\in\Po_1(\Re^2)$ in the basis defined by $\{1,\lambda_1,\lambda_2\}$, so there are
constants $\alpha_H$, $\beta_H$, and $\gamma_H$ such that
\begin{equation}
\label{eq:lambda-h}
\lambda_H(\x) = \alpha_H\lambda_1(\x) + \beta_H\lambda_2(\x) + \gamma_H
 = -(\x - \x_H)\cdot(\alpha_H\nu_1+\beta_H\nu_2),\quad e_1\nparallel e_2,
\end{equation}
where $\x_H$ is any point on the zero line. The requirements are that $\alpha_H$, $\beta_H$, and
$\gamma_H$ are nonzero and that $\alpha_H$ and $\beta_H$ have the same sign.  Without loss of
generality, we may assume that $\alpha_H$ and $\beta_H$ are positive.  In a similar way, $\lambda_V$
is chosen to intersect the lines extending $e_3$ and $e_4$, and when they are not parallel, either
strictly to the left or right of the intersection point $\x_{34}$. When $e_3$ and $e_4$ are not
parallel,
\begin{equation}
\label{eq:lambda-v}
\lambda_V(\x) = \alpha_V\lambda_1(\x) + \beta_V\lambda_2(\x) + \gamma_VSee
 = -(\x - \x_V)\cdot(\alpha_V\nu_1+\beta_V\nu_2),\quad e_3\nparallel e_4,
\end{equation}
where $\x_V$ is any point on the zero line, $\alpha_V>0$, $\beta_V>0$, and $\gamma_V\ne0$.  We
remark that a simple choice is to take
\begin{equation}\label{eq:simpleLambdaHV}
\lambda_H^{\text{simple}} = \lambda_3 - \lambda_4
\quad\text{and}\quad
\lambda_V^{\text{simple}} = \lambda_1 - \lambda_2.
\end{equation}

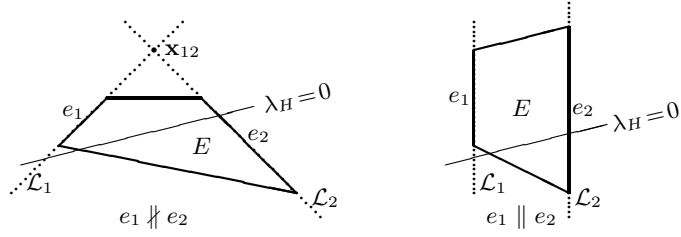
\begin{figure}
\centering{\setlength\unitlength{1.8pt}\begin{picture}(70,45)(9,2)\small
%
\thicklines
\multiput(10,10)(1.0,1.0){37}{\circle*{0.2}}
\multiput(75,5)(-1.0,1.0){42}{\circle*{0.2}}
\put(20,20){\line(1,1){10}}
\put(70,10){\line(-1,1){20}}
\put(20,20){\line(5,-1){50}}
\put(30,30){\line(1,0){20}}
\put(40,40){\circle*{1}}\put(46,40){\makebox(0,0){$\x_{12}$}}
\put(23,27){\makebox(0,0){$e_1$}}
\put(16,12){\makebox(0,0){$\cL_1$}}
\put(62,22){\makebox(0,0){$e_2$}}
\put(76,9){\makebox(0,0){$\cL_2$}}
\put(50,20){\makebox(0,0){$E$}}
\thinlines
\put(12,16){\line(4,1){49}}
\put(70,30){\rotatebox{13}{\makebox(0,0){$\lambda_H\!=\!0$}}}
\put(40,5){\makebox(0,0){$e_1\nparallel e_2$}}
\end{picture}\qquad\qquad\begin{picture}(50,50)(4,2)\small
%
\thicklines
\multiput(10,10)(0,1.0){37}{\circle*{0.2}}
\multiput(30,5)(0,1.0){46}{\circle*{0.2}}
\put(10,20){\line(0,1){20}}
\put(30,10){\line(0,1){35}}
\put(10,20){\line(2,-1){20}}
\put(10,40){\line(4,1){20}}
\put(7,30){\makebox(0,0){$e_1$}}
\put(14,12){\makebox(0,0){$\cL_1$}}
\put(33,27.5){\makebox(0,0){$e_2$}}
\put(34,9){\makebox(0,0){$\cL_2$}}
\put(20,28){\makebox(0,0){$E$}}
\thinlines
\put(4,16){\line(4,1){34}}
\put(46,26){\rotatebox{13}{\makebox(0,0){$\lambda_H\!=\!0$}}}
\put(20,5){\makebox(0,0){$e_1\parallel e_2$}}
\end{picture}}
\caption{Illustration of the zero lines of $\lambda_H$ and the point $\x_{12}$, if it exists.\label{fig:lambda-h}}
\end{figure}

The functions $R_V$ and $R_H$ are defined to satisfy the properties
\begin{alignat}3
\label{eq:RVe}
R_V(\x)|_{e_1} &= -\eta_V&&\quad\text{and}\quad&R_V(\x)|_{e_2} &= \xi_V,\\
\label{eq:RHe}
R_H(\x)|_{e_3} &= -\eta_H&&\quad\text{and}\quad&R_H(\x)|_{e_4} &= \xi_H,
\end{alignat}
for some positive constants $\eta_V$, $\xi_V$, $\eta_H$, and $\xi_H$.  For example, one choice is to let
\begin{align}
\label{eq:rational-v}
R_V^{\text{simple}}(\x) &= \frac{\lambda_1(\x) - \lambda_2(\x)}{\xi_V^{-1}\lambda_1(\x) + \eta_V^{-1}\lambda_2(\x)},\\
\label{eq:rational-h}
R_H^{\text{simple}}(\x) &= \frac{\lambda_3(\x) - \lambda_4(\x)}{\xi_H^{-1}\lambda_3(\x) + \eta_H^{-1}\lambda_4(\x)}
\end{align}
(note that the denominators do not vanish on $E$).

We now define the shape functions associated with the edge DoFs. Let
\begin{align}
\label{eq:lambda12}
\lambda_{12} &= \begin{cases}
\alpha_H\xi_V\lambda_1-\beta_H\eta_V\lambda_2, & e_1\nparallel e_2,\\
\xi_V\lambda_1 - \eta_V\lambda_2, & e_1\parallel e_2,
\end{cases}\\
\label{eq:lambda34}
\lambda_{34} &= \begin{cases}
\alpha_V\xi_H\lambda_3-\beta_V\eta_H\lambda_4, & e_3\nparallel e_4,\\
\xi_H\lambda_3 - \eta_H\lambda_4, & e_3\parallel e_4,
\end{cases}
\end{align}
which also have horizontal and vertical zero sets, respectively.  There are $2(r-1)$ shape functions
associated to the edges $e_1$ and $e_2$, and they are
\begin{alignat}2
\label{eq:shape-h}
\phi_{H,j}(\x) &= \lambda_3(\x)\lambda_4(\x)\lambda_H^{j}(\x), &&\quad j=0,1,\ldots,r-2,\\
\label{eq:shape-hpm}
\phi_{H,r-1+j}(\x) &=\lambda_3(\x)\lambda_4(\x)\lambda_{13}(\x)\lambda_H^{j}(\x), &&\quad j=0,1,\ldots,r-3,\\
\label{eq:shape-hr}
\phi_{H,2r-3}(\x) &=\lambda_3(\x)\lambda_4(\x)R_V(\x)\lambda_H^{r-2}(\x).
\end{alignat}
In a similar way, we define shape functions associated with edges $e_3$ and $e_4$ to be
\begin{alignat}2
\label{eq:shape-v}
\phi_{V,j}(\x) &= \lambda_1(\x)\lambda_2(\x)\lambda_V^{j}(\x), &&\quad j=0,1,\ldots,r-2,\\
\label{eq:shape-vpm}
\phi_{V,r-1+j}(\x) &=\lambda_1(\x)\lambda_2(\x)\lambda_{24}(\x)\lambda_V^{j}(\x), &&\quad j=0,1,\ldots,r-3,\\
\label{eq:shape-vr}
\phi_{V,2r-3}(\x) &=\lambda_1(\x)\lambda_2(\x)R_H(\x)\lambda_V^{r-2}(\x).
\end{alignat}
The edge shape functions are regular polynomials of degree~$r$ except the last two functions in each
direction, which may be rational functions, for example.  However, all shape functions restrict to
polynomials of degree~$r$ on the edges.


\subsection{Unisolvence}

The space of shape functions is
\begin{align}\label{eq:shapeSpace}
\cDS_{r}(E) 
&= {\rm{span}}\big\{ \phi_{v,13}(\x), \phi_{v,14}(\x), \phi_{v,23}(\x), \phi_{v,24}(\x),\\
\nonumber
&\qquad\qquad\phi_{H,j}(\x), \phi_{V,j}(\x)\ (j=0,1,\ldots,2r-3),\\
\nonumber
&\qquad\qquad\phi_{E,k}(\x)\ (k=1,\ldots,\tfrac12(r-2)(r-3))\big\}\\
\nonumber
&\subset \Po_r(E)\oplus\Supp_r^\cDS(E).
\end{align}
In this subsection, we show the unisolvence of the degrees of freedom, which will then show that
$\cDS_{r}(E)=\Po_r(E)\oplus\Supp_r^\cDS(E)$ and complete the requirements of Ciarlet's
Definition~\ref{defn:ciarlet} for $\cDS_r(E)$ to be a well defined finite element.  But first, we
require a lemma.

\begin{lemma}\label{lem:equ-space}
Let $k\geq1$. For any $\eta_V>0$ and $\xi_V>0$, there exists a function $\tilde R_V$, defined by
\eqref{eq:specialRV} and \eqref{eq:specialRVparallel}, with the properties \eqref{eq:RVe} such
that the two function spaces
\begin{align*}
\cA_k^1 &= {\rm{span}}\{ 1,\lambda_H,\ldots,\lambda_H^k, \lambda_{12},\lambda_H\lambda_{12},
                      \ldots, \lambda_H^{k-1}\lambda_{12},\lambda_H^k\tilde R_V\},\\
\cA_k^2 &= {\rm{span}}\big\{\{1,\lambda_H,\ldots,\lambda_H^k\}\otimes \{1,\tilde R_V\}\big\}
\end{align*}
are identical. Moreover, for any $\eta_H>0$ and $\xi_H>0$, there exists an $\tilde R_H$, defined by
\eqref{eq:specialRH}, with the properties \eqref{eq:RHe} such that the two function spaces
\begin{align*}
\cA_k^3 &= {\rm{span}}\{ 1,\lambda_V,\ldots,\lambda_V^k, \lambda_{34},\lambda_V\lambda_{34},
                     \ldots, \lambda_V^{k-1}\lambda_{34},\lambda_V^k\tilde R_H \},\\
\cA_k^4 &= {\rm{span}}\big\{\{1,\lambda_V,\ldots,\lambda_V^k\}\otimes\{1,\tilde R_H\}\big\}
\end{align*}
\end{lemma}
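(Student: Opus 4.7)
The plan is to construct $\tilde R_V$ explicitly so that a single polynomial identity linking $\lambda_{12}$, $\lambda_H$, and $\tilde R_V$ forces $\cA_k^1 = \cA_k^2$; the argument for $\tilde R_H$ and the pair $\cA_k^3,\cA_k^4$ is obtained by swapping the roles of the horizontal and vertical edges.

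First I would handle the generic case $e_1\nparallel e_2$, where $\lambda_H=\alpha_H\lambda_1+\beta_H\lambda_2+\gamma_H$ with $\alpha_H,\beta_H>0$ and $\gamma_H\ne0$ from \eqref{eq:lambda-h}. Seeking a bilinear relation of the shape $(\lambda_H-\gamma_H)\tilde R_V=\lambda_{12}$ (the constants are pinned down by matching coefficients of $\lambda_1$ and $\lambda_2$ on $e_1$ and $e_2$), I would set
\begin{equation*}
\tilde R_V(\x)=\frac{\lambda_{12}(\x)}{\lambda_H(\x)-\gamma_H}.
\end{equation*}
The denominator equals $\alpha_H\lambda_1+\beta_H\lambda_2$, which is strictly positive on $E$ since $\lambda_1,\lambda_2\ge0$ on $E$ and no point of $E$ lies on both $e_1$ and $e_2$; thus $\tilde R_V$ is well defined and bounded. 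Evaluating numerator and denominator on $e_1$ ($\lambda_1=0$) and on $e_2$ ($\lambda_2=0$) immediately yields \eqref{eq:RVe}. In the parallel case, $d_V:=\lambda_1+\lambda_2$ is a positive constant (the distance between the parallel lines), and I would take $\tilde R_V=(\xi_V\lambda_1-\eta_V\lambda_2)/d_V$, a linear polynomial automatically satisfying \eqref{eq:RVe}.

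Equality of the two spaces then reduces to the single identity $\lambda_{12}=(\lambda_H-\gamma_H)\tilde R_V$ (with $\lambda_H-\gamma_H$ replaced by the constant $d_V$ in the parallel case). For $\cA_k^1\subseteq\cA_k^2$, multiplying by $\lambda_H^j$ for $0\le j\le k-1$ gives
\begin{equation*}
\lambda_H^j\lambda_{12}=\lambda_H^{j+1}\tilde R_V-\gamma_H\,\lambda_H^j\tilde R_V\in\cA_k^2,
\end{equation*}
since $j+1\le k$. For the reverse inclusion I would rearrange the same identity as
\begin{equation*}
\gamma_H\,\lambda_H^j\tilde R_V=\lambda_H^{j+1}\tilde R_V-\lambda_H^j\lambda_{12}
\end{equation*}
and run a downward induction on $j$: the base case $j=k$ is immediate because $\lambda_H^k\tilde R_V$ is itself a generator of $\cA_k^1$, and each inductive step combines the previously obtained $\lambda_H^{j+1}\tilde R_V\in\cA_k^1$ with the generator $\lambda_H^j\lambda_{12}\in\cA_k^1$. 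The parallel case is even simpler, since there $\lambda_{12}$ and $\tilde R_V$ are scalar multiples of one another.

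The main obstacle I anticipate is selecting the right $\tilde R_V$: the conditions \eqref{eq:RVe} admit many solutions (for instance the $R_V^{\text{simple}}$ of \eqref{eq:rational-v}), but one must pick the unique form whose denominator is exactly $\lambda_H-\gamma_H$, so that $(\lambda_H-\gamma_H)\tilde R_V$ collapses to the linear polynomial $\lambda_{12}$. The hypothesis $\gamma_H\ne0$---interpreted geometrically as the zero line of $\lambda_H$ not passing through the intersection point $\x_{12}$---is precisely what makes the downward induction close, and an analogous role is played by $\gamma_V\ne0$ for the construction of $\tilde R_H$.
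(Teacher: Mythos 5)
Your proposal is correct and follows essentially the same route as the paper: you construct the same $\tilde R_V=\lambda_{12}/(\lambda_H-\gamma_H)$ (resp.\ $\lambda_{12}/\delta_H$ in the parallel case) and exploit the same key identity $(\lambda_H-\gamma_H)\tilde R_V=\lambda_{12}$, iterated inductively, with $\gamma_H\ne0$ playing exactly the role you identify. The only organizational difference is that you prove both inclusions $\cA_k^1\subseteq\cA_k^2$ and $\cA_k^2\subseteq\cA_k^1$ directly by your upward/downward inductions, whereas the paper proves only $\cA_k^2\subset\cA_k^1$ via an explicit summation formula and then closes with a dimension count; your two-sided argument is marginally more self-contained since it does not require verifying that the generators of $\cA_k^2$ are linearly independent.
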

are identical.

\begin{proof}
We show that $\cA_k^1=\cA_k^2$.  Assume that $e_1\nparallel e_2$, so that $\lambda_H$ has the form
\eqref{eq:lambda-h} for some $\alpha_H>0$, $\beta_H>0$, and $\gamma_H\ne0$. We define $\tilde R_V$
satisfying \eqref{eq:RVe} as
\begin{equation}\label{eq:specialRV}
\tilde R_V = \frac{\alpha_H\xi_V\lambda_1-\beta_H\eta_V\lambda_2}{\alpha_H\lambda_1+\beta_H\lambda_2}
 = \frac{\lambda_{12}}{\lambda_H-\gamma_H},\quad e_1\nparallel e_2.
\end{equation}
Because $\alpha_H$ and $\beta_H$ are both positive, the denominator is not zero on $E$.  We
compute
\begin{align}\label{eq:RVinSpan}
\tilde R_V &= \frac{\lambda_{12}}{\lambda_H-\gamma_H}
=  - \frac{1}{\gamma_H}\lambda_{12} + \frac{1}{\gamma_H}\frac{\lambda_{12}\lambda_H}{\lambda_H-\gamma_H}
=  - \frac{1}{\gamma_H}\lambda_{12} + \frac{1}{\gamma_H}\lambda_H\tilde R_V.
\end{align}
We show that for any $\ell\geq0$,
\begin{equation}\label{eq:RV-inSpanInduction}
\tilde R_V = -\sum_{j=1}^{\ell}\frac{1}{\gamma_H^j}\lambda_{12}\lambda_H^{j-1}
 + \frac{1}{\gamma_H^{\ell}}\lambda_H^{\ell}\tilde R_V.
\end{equation}
The relation holds trivially for $\ell=0$, and \eqref{eq:RVinSpan} shows the result for $\ell=1$.  Assuming by induction that \eqref{eq:RV-inSpanInduction} holds for $\ell-1$, we compute (using \eqref{eq:RVinSpan})
\begin{align*}
\tilde R_V &= -\sum_{j=1}^{\ell-1}\frac{1}{\gamma_H^j}\lambda_{12}\lambda_H^{j-1}
 + \frac{1}{\gamma_H^{\ell-1}}\lambda_H^{\ell-1}\tilde R_V\\
&= -\sum_{j=1}^{\ell-1}\frac{1}{\gamma_H^j}\lambda_{12}\lambda_H^{j-1}
 + \frac{1}{\gamma_H^{\ell-1}}\lambda_H^{\ell-1}
               \bigg(-\frac{1}{\gamma_H}\lambda_{12}  + \frac{1}{\gamma_H}\lambda_H\tilde R_V\bigg)\\
&= -\sum_{j=1}^{\ell}\frac{1}{\gamma_H^j}\lambda_{12}\lambda_H^{j-1}
 + \frac{1}{\gamma_H^{\ell}}\lambda_H^{\ell}\tilde R_V,
\end{align*}
and \eqref{eq:RV-inSpanInduction} holds for all $\ell$.  Therefore, for any $0\leq m\leq k$, taking $\ell=k-m$,
\begin{equation*}
\lambda_H^m\tilde R_V = -\sum_{j=1}^{k-m}\frac{1}{\gamma_H^j}\lambda_{12}\lambda_H^{j-1+m}
 + \frac{1}{\gamma_H^{k-m}}\lambda_H^{k}\tilde R_V \in \cA_k^1,
\end{equation*}
and we conclude that $\cA_k^2\subset\cA_k^1$.  But clearly $\dim\cA_k^2=2(k-1)$ and
$\dim\cA_k^1\leq2(k-1)$, and so the spaces are in fact equal.

If $e_1\parallel e_2$, then $\lambda_{12}=\xi_V\lambda_1-\eta_V\lambda_2$ and
$\lambda_1+\lambda_2=\delta_H>0$ is a constant.  We define
\begin{equation}\label{eq:specialRVparallel}
\tilde R_V = \frac{\xi_V\lambda_1-\eta_V\lambda_2}{\lambda_1+\lambda_2}
 = \frac{1}{\delta_H}\lambda_{12},\quad e_1\parallel e_2,
\end{equation}
which satisfies \eqref{eq:RVe}. In this case, it is trivial that $\cA_k^2=\cA_k^1$.

By symmetry, $\cA_k^3=\cA_k^4$, where now
\begin{equation}\label{eq:specialRH}
\tilde R_H = \begin{cases}
\dfrac{\lambda_{34}}{\lambda_V-\gamma_V}, & e_3\nparallel e_4,\\
\dfrac{1\vphantom{H^H}}{\delta_V}\lambda_{34}, & e_3\parallel e_4,
\end{cases}
\end{equation}
where $\lambda_3+\lambda_4=\delta_V>0$ is a constant when $e_3\parallel e_4$.
\end{proof}

\begin{theorem}\label{thm:unisolvent}
  Let $(E,\cDS_r(E),\cN)$ be the $r$-th order direct serendipity finite element defined by
  \eqref{eq:shapeSpace}, i.e., by \eqref{eq:shape-vertices}, \eqref{eq:shape-cell}, and
  \eqref{eq:shape-h}--\eqref{eq:shape-vr}$)$ and \eqref{eq:nodalFunctionals}. If
  $\phi\in\cDS_r(E)$ and $N_k(\phi)=0$, for all $k= 1,2,\ldots,D_r$, then $\phi=0$.  Moreover,
\begin{equation}\label{eq:DSR}
\cDS_r(E)=\Po_r(E)\oplus\Supp_r^\cDS(E),
\end{equation}
where $\Supp_r^\cDS(E)$ is defined in \eqref{eq:supplementSpace}.
\end{theorem}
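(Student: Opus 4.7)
The plan is to prove unisolvence of the $D_r$ nodal DoFs against the $D_r$ shape functions, and then deduce \eqref{eq:DSR} from a dimension count. So write a generic element as
\[
\phi=\sum c_{v,ij}\phi_{v,ij}+\sum c_{H,j}\phi_{H,j}+\sum c_{V,j}\phi_{V,j}+\sum c_{E,k}\phi_{E,k},
\]
and suppose every $N_k(\phi)=0$. Because every vertex of $E$ lies on one of $\{e_3,e_4\}$ and one of $\{e_1,e_2\}$, and because each edge or interior shape function carries a vanishing factor $\lambda_3\lambda_4$, $\lambda_1\lambda_2$, or $\lambda_1\lambda_2\lambda_3\lambda_4$, all non-vertex shape functions vanish at every vertex. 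Since $\phi_{v,ij}$ is nonzero only at $\x_{v,ij}$, the four vertex DoFs immediately force all $c_{v,ij}=0$.

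Now restrict $\phi$ to $e_1$. The factor $\lambda_1$ kills every $\phi_{V,j}$ and $\phi_{E,k}$, leaving $\phi|_{e_1}=\sum_j c_{H,j}\phi_{H,j}|_{e_1}$, which by \eqref{eq:RVe} (since $R_V|_{e_1}=-\eta_V$ is constant) is an honest polynomial of degree $\le r$ on $e_1$. Having $r+1$ prescribed zeros (two endpoints plus $r-1$ interior edge nodes), it vanishes identically. Dividing out $\lambda_3\lambda_4|_{e_1}$ and parametrizing $e_1$ by $s=\lambda_H|_{e_1}=\beta_H\lambda_2|_{e_1}+\gamma_H$ (linear with nonzero slope because $\beta_H\ne0$) converts the identity into
\[
\sum_{j=0}^{r-2}c_{H,j}\,s^j-\eta_V\sum_{j=0}^{r-3}c_{H,r-1+j}(s-\gamma_H)s^j-\eta_V\,c_{H,2r-3}\,s^{r-2}=0.
\]
The twin calculation on $e_2$, using $R_V|_{e_2}=\xi_V$ and $\lambda_{12}|_{e_2}=\xi_V(u-\gamma_H)$ with $u=\lambda_H|_{e_2}$, gives the same identity with $-\eta_V$ replaced by $+\xi_V$. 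Matching the $s^k$ and $u^k$ coefficients produces a $(2r-2)\times(2r-2)$ linear system for the $c_{H,j}$; a short induction starting from the constant term solves it using only $\eta_V+\xi_V>0$ and $\gamma_H\ne0$, concluding that every $c_{H,j}=0$. The parallel case $e_1\parallel e_2$ works even more easily, since $\lambda_{12}|_{e_1}=-\eta_V\delta_H$ and $\lambda_{12}|_{e_2}=\xi_V\delta_H$ are then constants. The symmetric argument on $e_3,e_4$ yields $c_{V,j}=0$. Finally $\phi=\lambda_1\lambda_2\lambda_3\lambda_4\sum_k c_{E,k}p_k$ with $\{p_k\}$ a basis of $\Po_{r-4}$; since $\lambda_1\lambda_2\lambda_3\lambda_4>0$ on the interior triangle $T$ and the $\x_{E,j}$ are its Lagrange nodes, standard triangular unisolvence forces $c_{E,k}=0$.

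Unisolvence makes the $D_r$ shape functions linearly independent, so $\dim\cDS_r(E)=D_r$. Since every shape function lies in $\Po_r(E)+\Supp_r^\cDS(E)$ by \eqref{eq:shape-vertices}--\eqref{eq:shape-vr}, the chain
\[
D_r=\dim\cDS_r(E)\le\dim\bigl(\Po_r(E)+\Supp_r^\cDS(E)\bigr)\le\dim\Po_r(E)+\dim\Supp_r^\cDS(E)\le(D_r-2)+2=D_r
\]
collapses to equalities, delivering \eqref{eq:DSR} together with the bonus facts $\dim\Supp_r^\cDS(E)=2$ and $\Po_r(E)\cap\Supp_r^\cDS(E)=\{0\}$. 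The main obstacle is the coupled polynomial system in the middle step: one has to verify that matching the two identities on $e_1$ and $e_2$ really does pin down all $2r-2$ horizontal coefficients. The crucial leverage is the sign split $R_V|_{e_1}<0<R_V|_{e_2}$ encoded in \eqref{eq:RVe}, which is precisely what makes the $\eta_V$- and $\xi_V$-combinations nonsingular and drives the induction; Lemma~\ref{lem:equ-space} (with its tensor-product reformulation using $\tilde R_V$) offers an alternative bookkeeping route when a cleaner basis is desired.
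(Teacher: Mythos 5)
Your proof is correct, and it follows the paper's overall strategy (kill the vertex coefficients with the diagonal $4\times4$ block, restrict to each edge, observe that a degree-$r$ polynomial with $r+1$ prescribed zeros vanishes, divide out $\lambda_3\lambda_4$, and finish with the dimension count $D_r=\dim\Po_r(E)+2$). The one genuine difference is how the $2r-2$ horizontal coefficients are extracted. The paper's proof is built around Lemma~\ref{lem:equ-space}: it temporarily swaps $R_V$ for the special $\tilde R_V$ of \eqref{eq:specialRV}, rewrites the edge span in the tensor-product form $\cA_{r-2}^2=\{1,\lambda_H,\dots,\lambda_H^{r-2}\}\otimes\{1,\tilde R_V\}$, so that matching coefficients of $\lambda_H^\ell$ on $e_1$ and $e_2$ decouples into independent $2\times2$ systems $a_\ell-\eta_Vb_\ell=0$, $a_\ell+\xi_Vb_\ell=0$, and then argues that reverting to the original $R_V$ leaves the DoFs unchanged. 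You instead stay in the original basis \eqref{eq:shape-h}--\eqref{eq:shape-hr} and solve the resulting coupled $(2r-2)\times(2r-2)$ system by induction from the constant term; your system is indeed lower-triangular in the blocks $(a_k,b_k)$ because $\lambda_{12}|_{e_1}=-\eta_V(s-\gamma_H)$ and $\lambda_{12}|_{e_2}=\xi_V(u-\gamma_H)$, and each $2\times2$ step is nonsingular exactly because $\gamma_H\ne0$ and $\eta_V+\xi_V>0$ --- the same hypotheses the paper uses. What your route buys is that Lemma~\ref{lem:equ-space} is not needed at all for unisolvence, and no replacement of $R_V$ by $\tilde R_V$ (with its accompanying ``the DoFs are unchanged'' remark) is required, since your argument only ever evaluates $R_V$ on $e_1\cup e_2$ where it is the constant $-\eta_V$ or $\xi_V$; what the paper's route buys is the explicit nodal basis \eqref{eq:ds-baseE1}--\eqref{eq:ds-baseV13}, for which the tensor-product form of $\cA_{r-2}^2$ is still wanted. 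Your treatment of the parallel case, of the cell block via triangular Lagrange unisolvence, and the final dimension chain forcing $\Po_r(E)\cap\Supp_r^\cDS(E)=\{0\}$ all match the paper's reasoning.
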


\begin{proof}
As noted in Subsection~\ref{sec:cell}, by construction, the full set of DoFs are unisolvent for
$\cDS_r(E)$ if the edge DoFs are unisolvent for the edge shape functions $\phi_{H,j}$ and
$\phi_{V,j}$.  We temporarily replace $R_V$ by $\tilde R_V$ defined in \eqref{eq:specialRV} or
\eqref{eq:specialRVparallel}.  In that case,
\begin{equation*}
\{\phi_{H,j} : j=0,1,\ldots,r-2\} = \lambda_3\lambda_4\cA_{r-2}^1
 = \lambda_3\lambda_4\cA_{r-2}^2,
\end{equation*}
by the lemma, and the representation of the space using $\cA_{r-2}^2$ clearly shows that the DoFs on
the edges $e_1$ and $e_2$ are unisolvent for $\phi_{H,j}$. That is, for an edge shape function
$\phi(\x)$ with vanishing DoFs, we can use $\cA_{r-2}^2$ to expand it as
\begin{equation*}
\phi(\x) = \lambda_3(\x)\lambda_4(\x)\sum_{\ell=0}^{r-2}\big(a_\ell + R_V(\x)\,b_\ell\big)\lambda_H^\ell(\x),
\end{equation*}
for some constants $a_\ell$ and $b_\ell$. On either edge $e_1$ or $e_2$ , $\phi(\x)$ is a polynomial of
degree $r$, which must vanish due to the vanishing of the DoFs.  Therefore,
$a_\ell + R_V(\x)\,b_\ell$ must vanish on each edge, and we conclude that both
$\eta_V a_\ell-\xi_V b_\ell=0$ and $\eta_V a_\ell+\xi_Vb_\ell=0$, i.e., $a_\ell=b_\ell=0$, and so
$\phi(\x)=0$.  It should be clear that we can return to the original $R_V$ and draw the same
conclusion, since the DoFs of any function in the argument are unchanged by this replacement. (Of
course, we no longer have that $\cA_{r-2}^1=\cA_{r-2}^2$, but only that their DoFs agree.) Similarly
we conclude unisolvence for $\phi_{V,j}$.

We conclude that $\dim\cDS_r(E)=D_r=\dim\Po_r(E)+2$. Since we clearly added only two shape functions
not in $\Po_r(E)$, the full space of polynomials is contained in $\cDS_r(E)$, and
$\cDS_r(E)=\Po_r(E)\oplus\Supp_r^\cDS(E)$.
\end{proof}


\subsection{Implementation as an $H^1$-Conforming Space}\label{sec:implementation}

The global direct serendipity finite element space of index $r\ge2$ over $\cT_h$ is
\begin{equation}
\cDS_r=\{v_h\in\cC^0(\Omega)\;:\; v_h|_E\in\cDS_r(E)\ \forall E\in \cT_h\}\subset H^1(\Omega).
\end{equation}
To implement $H^1$-conforming direct serendipity elements on the mesh $\cT_h$ over $\Omega$, we need
to find a proper basis for the finite element space, i.e., one that is continuous. We observe that
the interior cell shape functions, after extension by zero, are in $H^1(\Omega)$ and so cause no
difficulty.  The vertex and edge shape functions with DoFs on the boundaries of the elements must be
merged continuously. The simplest way to do this is to create a local nodal basis on every
$E\in\cT_h$ for these $4r$ shape functions.

Let $\phi$ be any one of the shape functions of $\cDS_r(E)$ defined above in \eqref{eq:shape-vertices},
\eqref{eq:shape-cell}, and \eqref{eq:shape-h}--\eqref{eq:shape-vr}.  To reduce rounding errors in
numerical calculations, we scale it so
\begin{align*}
\phi \quad\text{is replaced by}\quad \frac{\phi}{d_E^{n_\phi}},
\end{align*}
where $d_E = \sqrt{|E|}$ and $n_\phi$ is the degree of $\phi$ when it is a polynomial and
$n_{\phi_{H,2r-3}}=n_{\phi_{V,2r-3}}=r$.

In general, we can find the local basis $\{\varphi_1,\ldots,\varphi_{4r}\}$ by solving a small local
linear system. We order the shape functions with the vertex DoFs first ($\phi_1$ to $\phi_4$), the
horizontal edge DoFs on $e_1$ and $e_2$ next ($\phi_5$ to $\phi_{2r+2}$), and finally the vertical
edge DoFs on $e_3$ and $e_4$ ($\phi_{2r+3}$ to $\phi_{4r}$). We also order the DoFs similarly.
Construct the $4r\times 4r$ matrix ${\bf A}=(a_{ij})$ of the DoFs, i.e.,
${\bf A}_{ij} = N_j(\phi_i)$ for all $i,j\leq 4r$.  This matrix has a simple block structure based
on the DoFs on the vertices, edges $e_1$ and $e_2$, and edges $e_3$ and $e_4$, which is
\begin{align}
\bf{A} = \left(
\begin{matrix}
{\bf{A}}_{11} &  {\bf{A}}_{12}  & {\bf{A}}_{13}\\
{\bf0} & {\bf{A}}_{22} & {\bf0}\\
{\bf0} & {\bf{0}} & {\bf{A}}_{33}
\end{matrix}
\right),
\end{align}
where ${\bf{A}}_{11}$ is of size $4\times 4$ and ${\bf{A}}_{22}$ and ${\bf{A}}_{33}$ are of size
$2(r-1)\times 2(r-1)$.  From Theorem~\ref{thm:unisolvent} (unisolvence), we know that $\bf A$ is
invertible.  Let ${\bf{A}}^{-1}={\bf{B}}=(b_{ij})$ and define
\begin{equation*}
\varphi_i = \displaystyle\sum_{j=1}^{4r} b_{ij}\phi_j
\quad\implies\quad
N_k(\varphi_i) = \sum_{j=1}^{4r}b_{ij} N_k(\phi_j) 
=\sum_{j=1}^{4r}b_{ij} a_{jk} = \delta_{ik},
\end{equation*}
and we have our nodal basis.  Graphical depiction of the basis in special cases can be found in
\cite{Tao_2017_phd, Arbogast_Tao_2017_serendipity}.  Visually, there is nothing unusual about these
basis functions.

If we take the choice outlined in Lemma~\ref{lem:equ-space}, we can write down the nodal basis
explicitly, using the facts that $\cA_{r-2}^1=\cA_{r-2}^2$ and $\cA_{r-2}^3=\cA_{r-2}^4$.  The
edge basis functions become, for $j=1,\ldots,r-1$,
\begin{align}
\label{eq:ds-baseE1}
\varphi_{e_1,j}(\x)
&= \frac{\lambda_3(\x)\lambda_4(\x)}{\lambda_3(\x_{e_1,j})\lambda_4(\x_{e_1,j})}
      \frac{\xi_V - \tilde R_V(\x)}{\xi_V + \eta_V}
      \prod_{\shortstack{\scriptsize$k=1$\\\scriptsize$k\ne j$}}^{r-1}
            \frac{\lambda_H(\x) - \lambda_H(\x_{e_1,k})}{\lambda_H(\x_{e_1,j})-\lambda_H(\x_{e_1,k})},\\
\varphi_{e_2,j}(\x)
\label{eq:ds-baseE2}
&= \frac{\lambda_3(\x)\lambda_4(\x)}{\lambda_3(\x_{e_2,j})\lambda_4(\x_{e_2,j})}
      \frac{\tilde R_V(\x) + \eta_V}{\xi_V + \eta_V}
      \prod_{\shortstack{\scriptsize$k=1$\\\scriptsize$k\ne j$}}^{r-1}
            \frac{\lambda_H(\x) - \lambda_H(\x_{e_2,k})}{\lambda_H(\x_{e_2,j})-\lambda_H(\x_{e_2,k})},\\
\varphi_{e_3,j}(\x)
\label{eq:ds-baseE3}
&= \frac{\lambda_1(\x)\lambda_2(\x)}{\lambda_1(\x_{e_3,j})\lambda_2(\x_{e_3,j})}
      \frac{\xi_H - \tilde R_H(\x)}{\xi_H + \eta_H}
      \prod_{\shortstack{\scriptsize$k=1$\\\scriptsize$k\ne j$}}^{r-1}
             \frac{\lambda_V(\x) - \lambda_V(\x_{e_3,k})}{\lambda_V(\x_{e_3,j})-\lambda_V(\x_{e_3,k})},\\
\varphi_{e_4,j}(\x)
\label{eq:ds-baseE4}
&= \frac{\lambda_1(\x)\lambda_2(\x)}{\lambda_1(\x_{e_4,j})\lambda_2(\x_{e_4,j})}
      \frac{\tilde R_H(\x) + \eta_H}{\xi_H + \eta_H}
      \prod_{\shortstack{\scriptsize$k=1$\\\scriptsize$k\ne j$}}^{r-1}
             \frac{\lambda_V(\x) - \lambda_V(\x_{e_4,k})}{\lambda_V(\x_{e_4,j})-\lambda_V(\x_{e_4,k})}.
\end{align}
The vertex basis function $\varphi_{v,13}(\x)$ can be computed by first defining
\begin{equation*}
\tilde\phi_{v,13}(\x) = \frac{\lambda_2(\x)\lambda_4(\x)}{\lambda_2(\x_{v,13})\lambda_4(\x_{v,13})},
\end{equation*}
and then defining
\begin{align}
\label{eq:ds-baseV13}
\varphi_{v,13}(\x) &= \tilde\phi_{v,13}(\x) - \sum_{k\in\{1,3\}}\sum_{j=1}^{r-1} \tilde\phi_{v,13}(\x_{e_k,j})\,\varphi_{e_k,j}(\x).
\end{align}
The basis functions $\varphi_{v,14}(\x)$, $\varphi_{v,23}(\x)$, and $\varphi_{v,24}(\x)$ can be defined similarly.


\section{Serendipity supplements based on mapping from a reference element}\label{sec:mappingSupp}

The supplemental functions $\Supp_r^\cDS(E)$ used in the definition of $\cDS_r(E)$ in
\eqref{eq:shapeSpace} can be defined in terms of the bilinear map $\F_{\!E}:\hat E\to E$ and
$F_{\!E}^0$ discussed in Section~\ref{sec:notation}.  For example, since $\hat E=[-1,1]^2$, one can
define
\begin{align}
\label{eq:rational-h-v-piola}
R_V(\x) = F_{\!E}^0(\hat x_1)
\quad\text{and}\quad
R_H(\x) = F_{\!E}^0(\hat x_2),
\end{align}
for which $\eta_V=\xi_V=\eta_H=\xi_H=1$.

We can also use the map to define the entire supplemental functions \eqref{eq:rational-h} and
\eqref{eq:rational-v} themselves. For example, we can substitute the definitions
\begin{align}
\label{eq:shape-hr-map}
\phi_{H,2r-3}(\x) &= F_{\!E}^0\big((1-\hat x_2^2)\hat x_1\hat x_2^{r-2}\big),\\
\label{eq:shape-vr-map}
\phi_{V,2r-3}(\x) &= F_{\!E}^0\big((1-\hat x_1^2)\hat x_2\hat x_1^{r-2}\big),
\end{align}
giving direct serendipity elements with mapped supplements.  We must show unisolvence with this
substitution.  We proceed to show this property for edges $e_1$ and $e_2$. The other two edges will
then have this property by symmetry.

Easily,
\begin{equation*}
\phi_{H,2r-3}(\x) = F_{\!E}^0(1-\hat x_2^2)\,F_{\!E}^0(\hat x_1)\,\big(F_{\!E}^0(\hat x_2)\big)^{r-2}
 = F_{\!E}^0(1-\hat x_2^2)\,R_V\,(\lambda_H^*)^{r-2},
\end{equation*}
where $R_V$ is defined in \eqref{eq:rational-h-v-piola} and 
\begin{equation*}
\lambda_H^*=F_{\!E}^0(\hat x_2)
\end{equation*}
is a \emph{nonlinear} function.  Because $\F_{\!E}$ is a bilinear map, on the edges $e_1$ and
$e_2$, $\lambda_H^*$ is linear and $F_{\!E}^0(1-\hat x_2^2)$ is quadratic.  However, these may be
\emph{different} linear and quadratic functions on each edge.

The function $\lambda_H^*=F_{\!E}^0(\hat x_2)$ has the zero set being the line joining the center
of $e_1$ to the center of $e_2$.  Let $\x_H$ be any point on this line and $\nu_H$ denote a unit
normal to the line.  Define
\begin{equation}\label{eq:lambdaHnuH}
\lambda_H(\x) = -(\x - \x_H)\cdot\nu_H.
\end{equation}
If $e_1$ and $e_2$ are not parallel, then there exist (up to sign, so without loss of generality)
$\alpha_H>0$, $\beta_H>0$, and $\gamma_H\ne0$ such that
\begin{equation*}
\lambda_H(\x) = \alpha_H\lambda_1(\x) + \beta_H\lambda_2(\x) + \gamma_H.
\end{equation*}
If $e_1$ and $e_2$ are parallel, set $\alpha_H=\beta_H=1$ and $\gamma_H=0$ to obtain the same
representation of $\lambda_H$.  In either case, we define
\begin{equation*}
\lambda_{12} = \alpha_H\lambda_1 - \beta_H\lambda_2
\quad\text{and}\quad
\tilde R_V=\frac{\alpha_H\lambda_1 - \beta_H\lambda_2}{\alpha_H\lambda_1 + \beta_H\lambda_2}.
\end{equation*}
These functions satisfy the requirements of Lemma~\ref{lem:equ-space}.

Because $\lambda_H^*$ is linear on $e_1$ and $e_2$, there are nonzero constants $a$ and
$b$ of the same sign such that
\begin{equation}
\lambda_H^*\big|_{e_1} = a\lambda_H\big|_{e_1}
\quad\text{and}\quad
\lambda_H^*\big|_{e_2} = b\lambda_H\big|_{e_2}.
\end{equation}
Therefore, on the sides $e_1\cup e_2$,
\begin{align}\label{eq:piolaDS-edgeFcn-equ}
&\lambda_H(\x)^{r-2}\tilde R_V(\x)\\
\nonumber
&\quad= \frac{a^{r-2} - b^{r-2}}{a^{r-2} + b^{r-2}}\lambda_H(\x)^{r-2}
     + \frac{2}{a^{r-2} + b^{r-2}}(\lambda_H(\x)^*)^{r-2}\tilde R_V(\x),
\quad\x\in e_1\cup e_2.
\end{align}

We define the sets
\begin{align*}
\cA^1 &= {\rm{span}}\{ 1,\lambda_H,\ldots,\lambda_H^{r-2}, 
                      \lambda_{12},\lambda_H\lambda_{12},
                      \ldots, \lambda_H^{r-3}\lambda_{12},\lambda_H^{r-2}\tilde R_V\},\\
\cA^2 &= {\rm{span}}\big\{\{1,\lambda_H,\ldots,\lambda_H^{r-2}\}
                               \otimes\{1,\tilde R_V\}\big\},\\
\cA^{1,\sim} &= {\rm{span}}\{ 1,\lambda_H,\ldots,\lambda_H^{r-2},
                      \lambda_{12},\lambda_H\lambda_{12},
                      \ldots, \lambda_H^{r-3}\lambda_{12},(\lambda_H^{*})^{r-2}\tilde R_V\},\\
\cA^{1,*} &= {\rm{span}}\{ 1,\lambda_H,\ldots,\lambda_H^{r-2},
                      \lambda_{12},\lambda_H\lambda_{12},
                      \ldots, \lambda_H^{r-3}\lambda_{12},(\lambda_H^{*})^{r-2}R_V\},
\end{align*}
The later set $\cA^{1,*}$, times $F_{\!E}^0(1-\hat x_2^2)$, defines the shape functions for our
direct serendipity finite element based on the mapped supplement \eqref{eq:shape-hr-map}.
However, we can replace $R_V$ by $\tilde R_V$, since we consider only DoFs. That is, $\cA^{1,\sim}$
and $\cA^{1,*}$ are equivalent for our purposes.  Lemma~\ref{lem:equ-space} shows that
$\cA^1=\cA^2$, which is unisolvent.  Moreover, \eqref{eq:piolaDS-edgeFcn-equ} shows that
$\cA^1\subset\cA^{1,\sim}$, which have the same dimension and so are equal, and hence we have
unisolvence.  Unisolvence is maintained after multiplication by $F_{\!E}^0(1-\hat x_2^2)$, since
this modification concerns the fact that there are $c_1>0$ and $c_2>0$ so that
\begin{equation}\label{eq:mappedBubblePart}
F_{\!E}^0(1-\hat x_2^2)\big|_{e_j}=c_j\lambda_3\lambda_4,\quad j=1,2.
\end{equation}
We conclude that the direct serendipity element with the mapped supplements
\eqref{eq:shape-hr-map}--\eqref{eq:shape-vr-map}, i.e.,
\begin{equation}\label{eq:DSmap}
\cDS_r^{\text{map}}(E)
 = \Po_r(E)\oplus\spn\{F_{\!E}^0\big((1-\hat x_2^2)\hat x_1\hat x_2^{r-2}\big),
                                  F_{\!E}^0\big((1-\hat x_1^2)\hat x_2\hat x_1^{r-2}\big)\},
\end{equation}
is well defined.


\section{The de Rham complex and mixed finite elements}\label{sec:deRham}

The de Rham complex of interest here is
\begin{equation}\label{eq:deRham}
\Re \hooklongrightarrow H^1 \overset{\Curl\,}{\longlongrightarrow}
H(\Div) \overset{\Div\,}{\longlongrightarrow} L^2 \longrightarrow 0,
\end{equation}
where the curl (or rot) of a scalar function $\phi(\x)=\phi(x_1,x_2)$ is
$\Curl\,\phi = \bigg(\dfrac{\d\phi}{\d x_2},-\dfrac{\d\phi}{\d x_1}\bigg)$.  From right to left, the
image of one linear map is the kernel of the next. On rectangular elements, it is known
\cite{Arnold_Awanou_2011,Arnold_Awanou_2014} that the serendipity space $\cS_{r+1}$ is the precursor
of the Brezzi-Douglas-Marini space BDM$_r$ \cite{BDM_1985} for $r\geq1$; that is, on the reference
square $\hat E$, \eqref{eq:deRhamBDM} holds.


\subsection{Full and reduced AC spaces}\label{sec:ACspaces}

We have the following extension of \eqref{eq:deRhamBDM} to quadrilateral elements $E$. The direct
serendipity spaces $\cDS_r^{\text{map}}$ using the mapped supplements
\eqref{eq:shape-hr-map}--\eqref{eq:shape-vr-map} is the precursor of the reduced
$H(\Div)$-approximating Arbogast-Correa space AC$_r^{\textrm{red}}$ \cite{Arbogast_Correa_2016},
$r\geq1$, defined on meshes of convex quadrilaterals:
\begin{equation}\label{eq:deRhamACred}
\Re \hooklongrightarrow \cDS_{r+1}^{\text{map}}(E) \overset{\Curl\,}{\longlongrightarrow}
 \textrm{AC}_r^{\textrm{red}}(E) \overset{\Div\,}{\longlongrightarrow} \Po_{r-1}(E) \longrightarrow 0.
\end{equation}
Moreover, the full $H(\Div)$-approximating space AC$_r$, for $r\geq1$, satisfies
\begin{equation}\label{eq:deRhamAC}
\Re \hooklongrightarrow \cDS_{r+1}^{\text{map}}(E) \overset{\Curl\,}{\longlongrightarrow}
 \textrm{AC}_r(E) \overset{\Div\,}{\longlongrightarrow} \Po_{r}(E) \longrightarrow 0.
\end{equation}

This observation is clear once one realizes three sets of facts.  First, the direct serendipity
elements based on \eqref{eq:shape-hr-map}--\eqref{eq:shape-vr-map} have the structure
\begin{align}
\cDS_{r+1}^{\text{map}}(E) &= \Po_{r+1}(E)\oplus\Supp_{r+1}^{\cDS,\text{map}}(E),\\
\Supp_{r+1}^{\cDS,\text{map}}(E)
 &= \spn\big\{F_{\!E}^0\big((1-\hat x_2^2)\hat x_1\hat x_2^{r-1}\big),
                      F_{\!E}^0\big((1-\hat x_1^2)\hat x_2\hat x_1^{r-1}\big)\big\}.
\end{align}
Second, the AC elements have the structure
\begin{align}
\label{eq:AC}
\textrm{AC}_r(E) &= \textrm{AC}_r^{\textrm{red}}(E)\oplus\x\tilde\Po_{r}(E),\\
\label{eq:ACred}
\textrm{AC}_r^{\textrm{red}}(E) &= \Po_r^2(E)\oplus\Supp_{r+1}^{\textrm{AC}}(E),\\
\label{eq:ACsupp}
\Supp_{r+1}^{\textrm{AC}}(E)
&= \spn\big\{\F_{\!E}^1\,\Curl\big((1-\hat x_2^2)\hat x_1\hat x_2^{r-1}\big),
                     \F_{\!E}^1\,\Curl\big((1-\hat x_1^2)\hat x_2\hat x_1^{r-1}\big)\big\},
\end{align}
where $\F_{\!E}^1$ is the Piola mapping from $E$ to $\hat E$.  Finally, we have the fairly well-known
helmholtz-like decomposition (see, e.g., \cite{Arbogast_Correa_2016})
\begin{equation}
\Po_{r}^2(E) = \Curl\,\Po_{r+1}^2(E)\oplus\x\tilde\Po_{r-1}(E),
\end{equation}
the relation between the $\Curl$ operator and the bilinear and Piola maps
\begin{equation}
\Curl\,F_{\!E}^0 = \F_{\!E}^1\,\Curl,
\end{equation}
and the fact that the $\Div$ operator takes $\x\Po_{k}$ one-to-one and onto $\Po_{k}$ for any
$k\geq0$.

Now we see that
\begin{align}
\Curl\,\Supp_{r+1}^{\cDS,\text{map}}(E)
 &= \spn\big\{\Curl\,F_{\!E}^0\big((1-\hat x_2^2)\hat x_1\hat x_2^{r-1}\big),
                      \Curl\,F_{\!E}^0\big((1-\hat x_1^2)\hat x_2\hat x_1^{r-1}\big)\big\}\\
\nonumber
 &= \spn\big\{\F_{\!E}^1\,\Curl\big((1-\hat x_2^2)\hat x_1\hat x_2^{r-1}\big),
                      \F_{\!E}^1\,\Curl\big((1-\hat x_1^2)\hat x_2\hat x_1^{r-1}\big)\big\}\\
\nonumber
 &= \Supp_r^{\textrm{AC}}(E),
\end{align}
\vspace*{-20pt}\par\noindent
and so
\begin{equation}
\Curl\,\cDS_{r+1}^{\text{map}}(E) = \Curl\,\Po_{r+1}(E)\oplus\Supp_r^{\textrm{AC}}(E)
\end{equation}
is in the kernel of the operator $\Div$.  Finally, 
\begin{align}
\textrm{AC}_r^{\textrm{red}}(E) &= \Curl\,\Po_{r+1}(E)\oplus\Supp_r^{\textrm{AC}}(E)\oplus\x\tilde\Po_{r-1}(E),\\
\textrm{AC}_r(E) &= \Curl\,\Po_{r+1}(E)\oplus\Supp_r^{\textrm{AC}}(E)\oplus\x\tilde\Po_{r}(E),
\end{align}
satisfy the properties of the de Rham complex \eqref{eq:deRhamACred}--\eqref{eq:deRhamAC}.

We remark that it is easy to check that $\cS_1(E)$ (see \eqref{eq:S1}) precedes the element AC$_0$(E)
in the de Rham sequence \eqref{eq:deRhamAC}.


\subsection{Direct mixed finite elements on quadrilaterals}\label{sec:newMixed}

The de Rham theory provides a way of constructing a mixed finite element space $\V_r$ based on a well
defined direct serendipity space. Tangential derivatives of functions in $\cDS_{r+1}(E)$ along the
edges map by the $\Curl$ operator to normal derivatives; that is, if we define the unit tangential
vector
\begin{equation}
\label{eq:tau}
\tau_i=(-\nu_{i,2},\nu_{i,1})\quad\text{on }e_i,
\end{equation}
then for $\phi\in\cDS_{r+1}(E)$,
\begin{equation}\label{eq:tangentialToNormalDer}
\grad\phi\cdot\tau_i\big|_{e_i} = \Curl\,\phi\cdot\nu_i\big|_{e_i}.
\end{equation}
Since $\Curl\,\cDS_{r+1}(E)$ spans $\Po_{r}(e_i)$ independently of the other sides, the same is true of the
normal derivatives of $\V_r(E)$. In fact, for $r\geq1$, we have de Rham complexes for both full and
reduced direct $H(\Div)$-approximating mixed elements:
\begin{align}
\label{eq:deRhamV}
\Re \hooklongrightarrow \cDS_{r+1}(E) \overset{\Curl\,}{\longlongrightarrow}
 &\V_r^{\textrm{full}}(E) \overset{\Div\,}{\longlongrightarrow} \Po_{r}(E) \longrightarrow 0,\\
\label{eq:deRhamVred}
\Re \hooklongrightarrow \cDS_{r+1}(E) \overset{\Curl\,}{\longlongrightarrow}
 &\V_r^{\textrm{red}}(E) \overset{\Div\,}{\longlongrightarrow} \Po_{r-1}(E) \longrightarrow 0,
\end{align}
for any variant of our new direct serendipity spaces.  To see this fact, we need to decompose
$\V_r(E)$ (i.e, $\V_r^{\text{full}}(E)$ or $\V_r^{\text{red}}(E)$).

According to \cite{Arbogast_Correa_2016}, a reduced or full $H(\Div)$-approximating mixed finite
element space defined directly on a quadrilateral $E$ of minimal local dimension takes the form
($\cP$ in Definition~\ref{defn:ciarlet})
\begin{alignat}2
\label{eq:generalAC}
\V_r^{\text{full}}(E) &= \Po_r^2(E)\oplus\x\tilde\Po_{r}\oplus\Supp_r^\V(E)
 &&= \Curl\,\Po_{r+1}(E)\oplus\x\Po_{r}\oplus\Supp_r^\V(E),\\
\label{eq:generalACred}
\V_r^{\text{red}}(E) &= \Po_r^2(E)\oplus\Supp_r^\V(E)
 &&= \Curl\,\Po_{r+1}(E)\oplus\x\Po_{r-1}\oplus\Supp_r^\V(E),
\end{alignat}
where the choice of $\Supp_r^\V(E)$ is given by taking \eqref{eq:ACsupp}.  However, it is noted that
other supplemental functions could be used \cite[near (3.15)]{Arbogast_Correa_2016}.  Their normal
components must lie in $\Po_r(e_i)$ on each edge $e_i$ and, if they are mapped by the Piola
transform, they must contain a nontrivial component of the DoFs of $\Curl\,\hat x^{r+1}\hat y$ and
$\Curl\,\hat x\hat y^{r+1}$.

As given in \cite{Arbogast_Correa_2016}, the DoFs ($\cN$ in Definition~\ref{defn:ciarlet}) for
$\bfpsi\in\V_r^{\text{full}}(E)$ ($s=r$) or $\bfpsi\in\V_r^{\text{red}}(E)$ ($s=r-1$) are
\begin{alignat}2
\label{eq:vdofEdge}
&\int_{e_i}\bfpsi\cdot\nu_i\,p\,dx,&&\quad\forall p\in\Po_r(e_i),\ i=1,2,3,4,\\
\label{eq:vdofDiv}
&\int_{E}\bfpsi\cdot\grad q\,dx,&&\quad\forall q\in\Po_{s}(E),\\
\label{eq:vdofCell}
&\int_{E}\bfpsi\cdot\v\,dx,&&\quad\forall p\in\Bu_r^\V(E),
\end{alignat}
where the $H(\Div)$ bubble functions are
\begin{equation}
\label{eq:vBubbles}
\Bu_r^\V(E) = \Curl\big(\lambda_1\lambda_2\lambda_3\lambda_4\Po_{r-3}(E)\big).
\end{equation}
The DoFs \eqref{eq:vdofDiv} are determined entirely by the part of $\V_r^{\text{full}}(E)$ or
$\V_r^{\text{red}}(E)$ in the decomposition \eqref{eq:generalAC}--\eqref{eq:generalACred} that is
$\x\Po_{r}$ or $\x\Po_{r-1}$, respectively.  The DoFs \eqref{eq:vdofCell} correspond to the interior
cell direct serendipity DoFs.  In fact, $\Bu_r^\V(E)$ is exactly the $\Curl$ of the span of the cell
shape functions \eqref{eq:shape-cell} for $\cDS_{r+1}(E)$.  The DoFs \eqref{eq:vdofEdge} correspond
to the edge and vertex DoFs of $\cDS_{r+1}(E)$.

We can use any of our direct serendipity spaces to define the supplemental space $\Supp_r^\V(E)$
needed by $\V_r^{\text{full}}(E)$ or $\V_r^{\text{red}}(E)$.  The rest of the space is composed of
polynomials, and so need not be defined by $\cDS_{r+1}(E)$ through the $\Curl$ operator, although
this strategy could be used to help construct a basis for $\V_r(E)$ respecting the DoFs.  On $E$
when $r\geq1$, our new mixed spaces use the supplemental space
\begin{equation}
\Supp_{r}^\V(E)=\Curl\,\Supp_{r+1}^\cDS(E).
\end{equation}

In particular, as we saw, $\Curl\,\Supp_{r+1}^{\cDS,\text{map}}(E)$ gives the supplements for the
elements AC$_r(E)$.  If we use the fully direct serendipity supplements from
\eqref{eq:supplementSpace}, we obtain new families of fully direct mixed elements.  The computations
are not difficult.  Note that
\begin{equation}
\Curl\,\lambda_j = -\Curl\big((\x-\x_j)\cdot\nu_j\big) = \tau_j.
\end{equation}
Suppose that $\lambda_H$ is represented by \eqref{eq:lambdaHnuH} (i.e., the zero line is orthogonal
to $\nu_H$) and define $\tau_H=(-\nu_{H,2},\nu_{H,1})$, so $\Curl\,\lambda_H=\tau_H$.  If we use
\eqref{eq:rational-v}--\eqref{eq:rational-h} to define $R_V$ and $R_H$, the supplemental space is
(recall \eqref{eq:supplementSpace})
\begin{align}
&\Supp_{r}^\V(E) = \spn\{\bfsigma_{r,1},\bfsigma_{r,2}\},\\
&\bfsigma_{r,1} = \Curl(R_V^{\text{simple}}\lambda_H^{r-1}\lambda_3\lambda_4)
= \lambda_H^{r-1}\lambda_3\lambda_4
 \frac{(\xi_V^{-1}+\eta_V^{-1})(\lambda_2\tau_1-\lambda_1\tau_2)}{(\xi_V^{-1}\lambda_1+\eta_V^{-1}\lambda_2)^2}\\
\nonumber
&\qquad\quad + (r-1)R_V^{\text{simple}}\lambda_H^{r-2}\lambda_3\lambda_4\tau_H
            + R_V^{\text{simple}}\lambda_H^{r-1}(\lambda_4\tau_3+\lambda_3\tau_4),\\
&\bfsigma_{r,2} = \Curl(R_H^{\text{simple}}\lambda_V^{r-1}\lambda_1\lambda_2)
= \lambda_V^{r-1}\lambda_1\lambda_2
 \frac{(\xi_H^{-1}+\eta_H^{-1})(\lambda_4\tau_3-\lambda_3\tau_4)}{(\xi_H^{-1}\lambda_3+\eta_H^{-1}\lambda_4)^2}\\
\nonumber
&\qquad\quad + (r-1)R_H^{\text{simple}}\lambda_V^{r-2}\lambda_1\lambda_2\tau_V
            + R_H^{\text{simple}}\lambda_V^{r-1}(\lambda_2\tau_1+\lambda_1\tau_2).
\end{align}
The normal flux on each edge is easy to compute.  For $\bfsigma_{r,1}$, we have
\begin{align*}
\bfsigma_{r,1}\cdot\nu_1\big|_{e_1} &=
  -\eta_V\lambda_H^{r-2}\big((r-1)\lambda_3\lambda_4\tau_H\cdot\nu_1 +
                                      \lambda_H\,(\lambda_4\tau_3\cdot\nu_1+\lambda_3\tau_4\cdot\nu_1)\big),\\
\bfsigma_{r,1}\cdot\nu_2\big|_{e_2} &= \xi_V\lambda_H^{r-2}\big((r-1)\lambda_3\lambda_4\tau_H\cdot\nu_2 +
                                      \lambda_H\,(\lambda_4\tau_3\cdot\nu_2+\lambda_3\tau_4\cdot\nu_2)\big),\\
\bfsigma_{r,1}\cdot\nu_3\big|_{e_3} &= \bfsigma_{r,1}\cdot\nu_4\big|_{e_4} = 0.
\end{align*}
It is readily apparent that, indeed, the normal fluxes are in $\Po_r(e_i)$ for each $i$.


\subsection{Implementation as an $H(\Div)$-conforming mixed space}\label{sec:implementationMixed}

The mixed space of vector functions $\V_r$ over $\Omega$ is defined by merging continuously the
normal fluxes across each edge $e$ of the mesh $\cT_h$. That is,
\begin{align}
\V_r^{\text{full}} &= \big\{\v\in H(\Div;\Omega)\;:\;\v\big|_E\in\V_r^{\text{full}}(E)\text{ for all }E\in\cT_h\big\},\\
\V_r^{\text{red}} &= \big\{\v\in H(\Div;\Omega)\;:\;\v\big|_E\in\V_r^{\text{red}}(E)\text{ for all }E\in\cT_h\big\}.
\end{align}
This can be done locally by constructing a local basis respecting the (edge) DoFs, in a way similar
to that described for the serendipity elements in Section~\ref{sec:implementation}.  However, in
practical implementation, the hybrid form of the mixed method is often used
\cite{Arnold_Brezzi_1985}.  In that case, the elements are simply concatenated and no DoF-basis is
required.  The Lagrange multiplier space, used to enforce the normal flux continuity, is simply
\begin{equation}
\Lambda_r = \big\{\lambda\in L^2\big(\cup_{E\in\cT_h}\d E\big)\;:\;
                          \lambda\big|_e\in\Po_r(e)\text{ for each edge }e\text{ of }\cT_f\big\}.
\end{equation}

The mixed space of vector functions $\V_r^{\text {full}}$ or $\V_r^{\text{red}}$ is normally paired
with a space approximating scalar functions
\begin{equation}
W_s = \big\{w\in L^2(\Omega)\;:\;w\big|_E\in\Po_s(E)\text{ for all }E\in\cT_h\big\},
\end{equation}
denoted $W_r^{\text {full}}=W_r$ or $W_r^{\text{red}}=W_{r-1}$, respectively. These spaces are the
divergences of the corresponding vector function spaces.


\section{Stability and convergence properties}\label{sec:properties}

In this section, we summarize the stability and convergence theory for our new direct finite
elements.  For the most part, we work over the entire domain $\Omega$.


\subsection{Direct serendipity element properties}\label{sec:serendipityProperties}

In Section~\ref{sec:implementation}, we discussed creating a local nodal basis for some of the shape
functions of $\cDS_r(E)$.  By Theorem~\ref{thm:unisolvent}, there exists a fully nodal basis; that
is, one for which every basis function vanishes at all but one nodal point. We denote it as
$\{\varphi_1,\ldots,\varphi_{\dim\cDS_r(E)}\}$.

\begin{definition}
  Given the $r$-th order direct serendipity element $(E, \cDS_r(E), \cN)$ and the nodal basis of
  $\cDS_r(E)$, $\{\varphi_1,\ldots,\varphi_{\dim\cDS_r(E)}\}$, let the operator
  $\cI_E:L^2(E)\cap\cC^0(E)\longrightarrow \cDS_r(E)$ be interpolation. That is, for
  $\phi\in L^2(E)\cap \cC^0(E)$,
\[
\cI_E\,\phi = \sum_{j=1}^{\dim \cDS_r(E)} N_j(\phi)\,\varphi_j
 = \sum_{j=1}^{\dim \cDS_r(E)} \phi(\x_j)\,\varphi_j\in\cDS_r(E).
\]
  Given the finite element space $\cDS_r$ over $\Omega$, let the operator $\cI_h$ be
  global interpolation. That is, for a given function $\phi\in L^2(\Omega)\cap \cC^0(\Omega)$,
  $\cI_h\,v\in\cDS_r$ and $\cI_h\,\phi\big|_E = \cI_E\,\phi$.
\end{definition}

By Theorem~\ref{thm:unisolvent}, the local interpolation operator preserves polynomials, so we have
an important property~\cite[pp.~121--123]{Ciarlet_1978} expressed in the following lemma.

\begin{lemma}
\label{thm:poly-preserve}
The interpolation operator $\cI_E$ is polynomial preserving, i.e., $\forall \psi \in \Po_r(E)$,
$\cI_E\,\psi = \psi$. Moreover, $\|\cI_E\|$ is bounded in the $L^2$-norm.
\end{lemma}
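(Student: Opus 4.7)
The plan is to run the standard Ciarlet-style argument \cite[pp.~121--123]{Ciarlet_1978}, leaning entirely on the nodal basis whose existence is guaranteed by Theorem~\ref{thm:unisolvent}. First I would record that, by unisolvence, there is a basis $\{\varphi_1,\ldots,\varphi_D\}$ of $\cDS_r(E)$ dual to the DoFs in the sense $N_k(\varphi_j)=\delta_{jk}$. Then for any $\phi\in\cDS_r(E)$, the function $\sum_j N_j(\phi)\varphi_j$ has exactly the same DoFs as $\phi$; unisolvence forces these two elements of $\cDS_r(E)$ to coincide, which is precisely the statement $\cI_E\phi=\phi$. Since Theorem~\ref{thm:unisolvent} also asserts $\Po_r(E)\subset\cDS_r(E)$, polynomial preservation $\cI_E\psi=\psi$ for $\psi\in\Po_r(E)$ drops out as an immediate special case.

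For the second claim I would simply expand the definition of $\cI_E\phi$ and apply the triangle inequality:
\[
\|\cI_E\phi\|_{L^2(E)}
 \le \sum_{j=1}^{D}|\phi(\x_j)|\,\|\varphi_j\|_{L^2(E)}
 \le \Big(\sum_{j=1}^D\|\varphi_j\|_{L^2(E)}\Big)\,\|\phi\|_{L^\infty(E)}.
\]
The factor in parentheses is a finite constant depending only on $E$ (through the geometry that determines the $\varphi_j$), so $\cI_E$ is bounded from $\cC^0(E)$ into $L^2(E)$. No deeper estimate is needed for the lemma as stated.

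The only real subtlety --- and the thing I would flag as the ``obstacle'' --- is interpreting ``bounded in the $L^2$-norm.'' Point evaluation is not continuous on $L^2$, so the phrase cannot mean bounded on $L^2(E)$ itself; it must mean bounded from a domain norm that controls pointwise values into $L^2(E)$, e.g.\ the $L^\infty(E)$ norm (used above) or any $H^s(E)$ norm with $s>1$ via the Sobolev embedding $H^s\hookrightarrow\cC^0$ in $\Re^2$. Either choice makes the display above conclude the proof. If later sections require a bound that is \emph{uniform} over the family $\cT_h$, one would rescale the shape functions by $d_E=\sqrt{|E|}^{\,n_\phi}$ as in Section~\ref{sec:implementation}, then control $\sum_j\|\varphi_j\|_{L^2(E)}$ through the shape-regularity constant $\sigma_*$; but this uniform version is stronger than what the lemma asserts, so I would not pursue it here.
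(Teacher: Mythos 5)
Your proposal is correct and is exactly the argument the paper intends: the paper gives no proof beyond invoking Theorem~\ref{thm:unisolvent} and citing \cite[pp.~121--123]{Ciarlet_1978}, which is precisely the nodal-basis reproduction argument you spell out, and your reading of ``bounded in the $L^2$-norm'' as boundedness from the domain $L^2(E)\cap\cC^0(E)$ (equipped with a norm controlling point values) into $L^2(E)$ is the right one. No gaps.
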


With this lemma and Theorem~\ref{thm:unisolvent}, we have the analogue of the
Bramble-Hilbert~\cite{Bramble_Hilbert_1970} or Dupont-Scott~\cite{Dupont_Scott_1980} lemma for
local and global error estimation, provided the mesh is shape regular.

\begin{lemma}\label{lem:bramble}
There exists a constant $C>0$ such that for all functions $\phi\in H^{s+1}(E)$
 $(H^1(E)\cap\cC^0(E)$ if $s=0)$,
\begin{align}
|\phi-\cI_E\,\phi|_{m,E}\leq C\,h_E^{s+1-m}\,|\phi|_{s+1,E}, \quad m=0,1\text{ and }s=0,1,\ldots,r,
\end{align}
where $|\cdot|_{m,E}$ is the $H^m(E)$ seminorm.  Moreover, suppose that $\cT_h$ is uniformly shape
regular as $h\to0$. Then there exists a constant $C>0$, independent of $h$, such that for all functions
$\phi\in H^{s+1}(\Omega)$ $(\phi\in H^1(\Omega)\cap \cC^0(\overline\Omega)$ if $s=0)$,
\begin{align}
|\phi - \cI_h\,\phi|_{m,\Omega}\leq C\,h^{s+1-m}\,|\phi|_{s+1,\Omega}, \quad m=0,1\text{ and }s=0,1\ldots,r.
\end{align}
\end{lemma}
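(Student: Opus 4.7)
The plan is to reduce to a standard Bramble--Hilbert argument using the two ingredients supplied by Lemma~\ref{thm:poly-preserve}: polynomial preservation of $\cI_E$ and its $L^2$-boundedness. By polynomial preservation, for any $p\in\Po_s(E)\subset\Po_r(E)$ we have $\phi-\cI_E\,\phi = (\phi-p) - \cI_E(\phi-p)$, so
\[
|\phi - \cI_E\,\phi|_{m,E} \leq |\phi - p|_{m,E} + |\cI_E(\phi-p)|_{m,E}.
\]
Provided that $\cI_E$ is bounded as an operator from $H^{s+1}(E)\cap\cC^0(E)$ into $H^m(E)$, infimizing over $p\in\Po_s(E)$ and invoking the standard polynomial approximation result (Dupont--Scott) delivers $|\phi - \cI_E\,\phi|_{m,E}\leq C\,h_E^{s+1-m}|\phi|_{s+1,E}$.

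The technical step is to establish the operator bound with a constant $C$ that depends only on the shape-regularity constant $\sigma_*$, not on the particular element. For this I would work on the rescaled element $\tilde E = h_E^{-1}(E-\x_0)$ of unit diameter. The hypothesis $\rho_E/h_E\geq\sigma_*$ confines $\tilde E$ to a compact moduli space of convex quadrilaterals. The geometric quantities $\lambda_i$, $\lambda_H$, $\lambda_V$, $\lambda_{12}$, $\lambda_{34}$, $R_V$, $R_H$ built in Sections~\ref{sec:vertexDoFs}--\ref{sec:edges} (with the natural parameter choices, for instance \eqref{eq:simpleLambdaHV} and \eqref{eq:rational-v}--\eqref{eq:rational-h}, or the mapped variants from Section~\ref{sec:mappingSupp}) depend continuously on $\tilde E$, and so do the shape functions $\phi_j$, the matrix $\bf A$ from Section~\ref{sec:implementation}, its inverse $\bf B$, and therefore the nodal basis $\{\varphi_j\}$. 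Unisolvence (Theorem~\ref{thm:unisolvent}) keeps ${\bf A}$ invertible throughout the compact family, so $\|\cI_{\tilde E}\|_{H^{s+1}(\tilde E)\to H^m(\tilde E)}$ is bounded by a constant depending only on $\sigma_*$. Pulling back to $E$ introduces the factor $h_E^{s+1-m}$ in the usual way.

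For the global estimate, since $\cI_h\phi|_E=\cI_E\phi$, one squares and sums the local bounds over all $E\in\cT_h$:
\[
|\phi - \cI_h\,\phi|^2_{m,\Omega}
 = \sum_{E\in\cT_h}|\phi - \cI_E\,\phi|^2_{m,E}
 \leq C^2\sum_{E\in\cT_h}h_E^{2(s+1-m)}|\phi|^2_{s+1,E}
 \leq C^2 h^{2(s+1-m)}|\phi|^2_{s+1,\Omega},
\]
and taking square roots completes the proof.

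The main obstacle is the uniform operator bound. Unlike classical affine-equivalent families (triangles, or rectangles mapped affinely to a reference square), our elements are genuinely non-equivalent: the shape functions depend on $E$ through the edge distances $\lambda_i$ and the auxiliary functions $\lambda_H$, $\lambda_V$, $R_V$, $R_H$. The compactness-plus-continuity argument on the space of shape-regular rescaled quadrilaterals is what converts the pointwise unisolvence of Theorem~\ref{thm:unisolvent} into a uniform stability bound. Once this is in place, the rest of the argument is the standard Bramble--Hilbert summation.
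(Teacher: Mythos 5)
Your proposal is correct and follows essentially the route the paper intends: the paper gives no explicit proof, simply invoking Lemma~\ref{thm:poly-preserve} (polynomial preservation and boundedness of $\cI_E$) together with the cited Bramble--Hilbert/Dupont--Scott machinery, and your argument is precisely the standard instantiation of that machinery. Your explicit treatment of the uniform operator bound via rescaling and compactness of the moduli of shape-regular quadrilaterals is the right way to handle the fact that these elements are not affine-equivalent, a detail the paper leaves implicit.
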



\subsection{Direct mixed finite element properties}\label{sec:mixedProperties}

As was done by Raviart and Thomas \cite{Raviart_Thomas_1977} for their mixed spaces, we can define a
projection operator, usually denoted $\pi$, mapping $H(\Div;\Omega)\cap(L^{2+\epsilon}(\Omega))^2$,
$\epsilon>0$, onto $\V_r$ (full or reduced) that has several important properties.  The operator
$\pi$ is pieced together from locally defined operators $\pi_E$. Following
\cite{Arbogast_Correa_2016}, for $\v$, we define $\pi_E\v$ in terms of the DoFs
\eqref{eq:vdofEdge}--\eqref{eq:vdofCell}. The operator $\pi$ satisfies the commuting
diagram property~\cite{Douglas_Roberts_1985}, which is to say that
\begin{equation}
\cP_{W_s}\div\v=\div\pi\v,
\end{equation}
where $\cP_{W_s}$ is the $L^2$-orthogonal projection operator onto $W_s=\div\V_r$ and $s=r$ for full
spaces and $s=r-1$ for reduced.  Since $\pi_E$ is bounded in, say, $H^1$, we obtain the following
approximation results \cite{Bramble_Hilbert_1970, Dupont_Scott_1980, Brezzi_Fortin_1991,
  Ciarlet_1978, Boffi_Brezzi_Fortin_2013, Arbogast_Correa_2016}.

\begin{lemma}\label{lem:brambleMixed}
Suppose that $\cT_h$ is uniformly shape regular as $h\to0$. Then there is a constant $C>0$,
independent of $h$, such that
\begin{alignat}2
\label{eq:approx_u}
\|\v-\pi\v\|_{0,\Omega} &\le C\,\|\v\|_{k,\Omega}\,h^{k},&&\quad k=1,\ldots,r+1,\\
\label{eq:approx_divu}
\|\div(\v-\pi\v)\|_{0,\Omega} &\le C\,\|\div\u\|_{k,\Omega}\,h^{k},&&\quad k=0,1,\ldots,s+1,\\
\label{eq:approx_p}
\|p-\cP_{W_s}p\|_{0,\Omega} &\le C\,\|p\|_{k,\Omega}\,h^{k},&&\quad k=0,1,\ldots,s+1,
\end{alignat}
where $s=r\ge1$ and $s=r-1\ge0$ for full and reduced $H(\Div)$-approximation, respectively, and
$\|\cdot\|_{m,\Omega}$ is the $H^m(\Omega)$ norm.  Moreover, the discrete inf-sup condition
\begin{equation}
\label{eq:inf-sup}
\sup_{\v_h\in\V_r}\frac{(w_h,\div\v_h)}{\|\v_h\|_{H(\text{\rm div})}}
 \ge \gamma\,\|w_h\|_{0,\Omega},\quad\forall w_h\in W_s,
\end{equation}
holds for some $\gamma>0$ independent of $h$.
\end{lemma}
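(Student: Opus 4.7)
The plan is to follow the Raviart--Thomas/Douglas--Roberts template \cite{Raviart_Thomas_1977, Douglas_Roberts_1985}, adapted to the direct mixed spaces. First, I would verify that the DoFs \eqref{eq:vdofEdge}--\eqref{eq:vdofCell} are unisolvent on $\V_r(E)$ in both the full and reduced cases, so that $\pi_E$ is well defined. The DoF count matches $\dim\V_r(E)$, and unisolvence follows from the de Rham complexes \eqref{eq:deRhamV}--\eqref{eq:deRhamVred}: the edge DoFs \eqref{eq:vdofEdge} and bubble DoFs \eqref{eq:vdofCell} correspond under $\Curl$ to the edge/vertex and cell DoFs of $\cDS_{r+1}(E)$ already treated in Theorem \ref{thm:unisolvent}, while the interior moment DoFs \eqref{eq:vdofDiv} detect the component in $\x\Po_s$ because $\Div$ is a bijection from $\x\Po_s(E)$ onto $\Po_s(E)$. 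Because $\Po_r^2(E)\subset\V_r(E)$ in both variants, polynomial preservation $\pi_E q=q$ for every $q\in\Po_r^2(E)$ is immediate.

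Second, the local commuting diagram follows by integration by parts: for any $q\in\Po_s(E)$,
\[
\int_E\div(\v-\pi_E\v)\,q\,dx
 = -\int_E(\v-\pi_E\v)\cdot\grad q\,dx
 + \sum_{i=1}^{4}\int_{e_i}(\v-\pi_E\v)\cdot\nu_i\,q\,ds,
\]
and both terms vanish by \eqref{eq:vdofDiv} (since $\grad q\in\grad\Po_s(E)$) and \eqref{eq:vdofEdge} (since $s\le r$ gives $q|_{e_i}\in\Po_r(e_i)$), respectively. Patching over $\cT_h$ yields $\div\pi\v=\cP_{W_s}\div\v$. With polynomial preservation and the commuting diagram in hand, the three approximation estimates follow in the standard way: \eqref{eq:approx_u} by the Bramble--Hilbert/Dupont--Scott lemma \cite{Bramble_Hilbert_1970,Dupont_Scott_1980} applied on each $E$ and summed using shape regularity; \eqref{eq:approx_p} by the classical $L^2$-projection error on broken polynomial spaces; and \eqref{eq:approx_divu} by applying \eqref{eq:approx_p} to $\div\v$ through the commuting diagram. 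The inf-sup condition \eqref{eq:inf-sup} then comes from Fortin's criterion: given $w_h\in W_s$, a standard lifting on the polygonal $\Omega$ produces $\v\in H^1(\Omega)^2$ with $\div\v=w_h$ and $\|\v\|_{1,\Omega}\le C\|w_h\|_{0,\Omega}$, so that $\v_h=\pi\v\in\V_r$ satisfies $\div\v_h=\cP_{W_s}w_h=w_h$ together with $\|\v_h\|_{H(\Div;\Omega)}\le C\|w_h\|_{0,\Omega}$.

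The principal technical obstacle is establishing a bound on $\|\pi_E\|_{H^1\to L^2}$ that is uniform in $h_E$, because the supplemental vectors in $\Supp_r^\V(E)$ constructed by $\Curl$ from the rational functions $R_V^{\text{simple}}$, $R_H^{\text{simple}}$ (see Section~\ref{sec:newMixed}) are not pulled back from a single fixed reference element, so one cannot simply invoke a compactness argument there. I would handle this by affinely rescaling $E$ to a unit-diameter quadrilateral $\tilde E$: uniform shape regularity $\rho_E/h_E\ge\sigma_*$ controls the geometry of $\tilde E$ and keeps the denominators in $R_V^{\text{simple}}$, $R_H^{\text{simple}}$ uniformly bounded away from zero, so each $\bfsigma_{r,i}$ has $W^{1,\infty}(\tilde E)$ norm bounded in terms of $\sigma_*$ and $r$ only. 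A routine duality/trace argument on $\tilde E$ then yields a uniform operator bound for $\pi_{\tilde E}$, and standard affine scaling returns the estimate to $E$ with the correct power of $h_E$, completing the proof.
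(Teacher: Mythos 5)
Your proposal is correct and follows essentially the same route as the paper, which itself only sketches the argument: it defines $\pi_E$ elementwise from the DoFs \eqref{eq:vdofEdge}--\eqref{eq:vdofCell} following Arbogast--Correa, invokes the commuting diagram property and $H^1$-boundedness of $\pi_E$, and cites the standard Raviart--Thomas/Douglas--Roberts and Bramble--Hilbert/Dupont--Scott machinery. You actually supply more detail than the paper does, in particular on the uniform boundedness of $\pi_E$ for the unmapped rational supplements, which the paper passes over with a citation.
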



\subsection{Application to second order elliptic equations}\label{sec:ellipticProperties}

Consider a uniformly elliptic problem with a homogeneous Dirichlet boundary condition
\begin{alignat}{2}
\label{eq:elliptic-pde}
-\div(\a \grad p) &= f &&\quad\text{in }\Omega,\\
\label{eq:elliptic-dirBC}
 p &= 0 &&\quad\text{on }\partial\Omega,
\end{alignat}
where the second order tensor $\a(\x)$ is uniformly positive definite and bounded, and
$f\in L^2(\Omega)$.  The boundary value problem can be written in the weak form: Find
$p\in H_0^1(\Omega)$ such that
\begin{align}
\label{eq:weak-bvp}
(\a\grad p,\grad q) = (f,q), \quad\forall q\in H_0^1(\Omega),
\end{align}
where $(\cdot,\cdot)$ is the $L^2(\Omega)$ inner product. Setting
\begin{equation}
\label{eq:elliptic-flux}
\u = -\a\grad p,
\end{equation}
we also have the mixed weak form: Find $\u\in H(\Div;\Omega)$ and $p\in L^2(\Omega)$ such that
\begin{alignat}3
\label{eq:mixed-darcy}
&(\a^{-1}\u,\v) - (p,\div\v) &&=0, &&\quad\forall\v\in H(\Div;\Omega),\\
\label{eq:mixed-conservation}
&(\div\u,w) &&= (f,w), &&\quad\forall w\in L^2(\Omega).
\end{alignat}

Define the global finite element space over $\cT_h$
\begin{equation*}\label{eq:x0h-in-h01}
X_{0,h}=\{v_h\in\cDS_r\;:\; v_h=0\text{ on }\d\Omega\}\subset H_0^1(\Omega).
\end{equation*}
We then obtain the Galerkin approximation: Find $p_h\in X_{0,h}$ such that
\begin{align}
\label{eq:approx-bvp}
(\a\grad p_h,q_h) = (f,q_h), \quad\forall q_h\in X_{0,h}.
\end{align}
Combining C\'ea's lemma \cite{Ciarlet_1978,Brenner_Scott_1994} and the global projection estimate
Lemma~\ref{lem:bramble}, we obtain an $H^1$-error estimate for the problem.  Since $\Omega$ is a
polygonal domain, $\partial\Omega$ is a Lipschitz boundary.  If we assume that $\Omega$ is also
convex, we have elliptic regularity of the solution~\cite[Theorem~4.3.1.4]{Grisvard_1985}, and the
Aubin-Nitsche duality principle \cite{Ciarlet_1978,Brenner_Scott_1994} gives an $L^2$-error
estimate.

\begin{theorem}\label{thm:convergence}
  Let $\Omega$ be a convex polygonal domain and let $\cT_h$ be uniformly shape regular.  There
  exists a constant $C>0$, independent of $h$, such that
\begin{align}
\|p-p_h\|_{m,\Omega}&\leq C\,h^{s+1-m}\,|p|_{s+1,\Omega},\quad s=0,1,\ldots,r,\quad m=0,1,
\end{align}
where $p_h$ satisfies \eqref{eq:approx-bvp}.
\end{theorem}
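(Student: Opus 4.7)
The plan is to split the estimate into the $H^1$ case ($m=1$) and the $L^2$ case ($m=0$), which are handled by the two classical tools mentioned just before the statement.

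For $m=1$, I would start with C\'ea's lemma applied to \eqref{eq:approx-bvp}. Since $\a$ is uniformly positive definite and bounded, the bilinear form $(\a\grad\cdot,\grad\cdot)$ is coercive and continuous on $H_0^1(\Omega)$, so C\'ea's lemma gives
\[
\|p-p_h\|_{1,\Omega} \leq C\inf_{q_h\in X_{0,h}}\|p-q_h\|_{1,\Omega}.
\]
To bound the infimum, I would use the global interpolant $\cI_h p$. Here I need to be slightly careful: in general $\cI_h p$ need not vanish on $\d\Omega$, but since $p\in H_0^1(\Omega)$ and the trace of $p$ vanishes on each boundary edge, the nodal values of $p$ at the boundary nodes (vertices and edge nodal points) are zero, and the interior cell DoFs do not affect boundary values. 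Hence $\cI_h p\in X_{0,h}$. Then Lemma~\ref{lem:bramble} with $m=1$ yields
\[
\|p-\cI_h p\|_{1,\Omega} \leq C\,h^{s}\,|p|_{s+1,\Omega}, \quad s=0,1,\ldots,r,
\]
which handles the $m=1$ case.

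For $m=0$, I would invoke the Aubin--Nitsche duality argument. Consider the dual problem: find $\psi\in H_0^1(\Omega)$ with $-\div(\a^{T}\grad\psi)=p-p_h$ in $\Omega$. Because $\Omega$ is convex polygonal and $\a$ is smooth enough, elliptic regularity (Grisvard's theorem, as cited) gives $\psi\in H^2(\Omega)$ with $\|\psi\|_{2,\Omega}\leq C\|p-p_h\|_{0,\Omega}$. Testing the dual equation with $p-p_h$, using Galerkin orthogonality $(\a\grad(p-p_h),\grad q_h)=0$ for all $q_h\in X_{0,h}$, and then subtracting $\cI_h\psi\in X_{0,h}$ to exploit orthogonality gives
\[
\|p-p_h\|_{0,\Omega}^2 = (\a\grad(p-p_h),\grad(\psi-\cI_h\psi)) \leq C\,\|p-p_h\|_{1,\Omega}\,\|\psi-\cI_h\psi\|_{1,\Omega}.
\]
Applying Lemma~\ref{lem:bramble} with $s=1$ to $\psi$ gives $\|\psi-\cI_h\psi\|_{1,\Omega}\leq Ch\|\psi\|_{2,\Omega}\leq Ch\|p-p_h\|_{0,\Omega}$. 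Combined with the $m=1$ estimate already obtained, this yields $\|p-p_h\|_{0,\Omega}\leq C h^{s+1}|p|_{s+1,\Omega}$.

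The only nonroutine point in this outline is the observation that $\cI_h p \in X_{0,h}$ when $p|_{\d\Omega}=0$; this works here because our nodal DoFs on each boundary edge lie on the edge and no DoF at a boundary node depends on interior values of $p$ through anything other than point evaluation. All other ingredients—C\'ea's lemma, convex-domain $H^2$ regularity, and the interpolation estimates of Lemma~\ref{lem:bramble}—are imported directly from the stated references and the preceding results, so the main obstacle is really just confirming that the direct serendipity interpolant respects the homogeneous Dirichlet boundary condition, after which the classical duality argument closes the proof.
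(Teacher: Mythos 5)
Your proposal is correct and follows essentially the same route as the paper, which proves the theorem exactly by combining C\'ea's lemma with the interpolation estimate of Lemma~\ref{lem:bramble} for the $H^1$ bound, and then invoking convex-polygon elliptic regularity (Grisvard) together with the Aubin--Nitsche duality argument for the $L^2$ bound. Your additional observation that the nodal interpolant $\cI_h p$ lands in $X_{0,h}$ when $p$ vanishes on $\d\Omega$ is a detail the paper leaves implicit, and it is argued correctly.
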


We also have the mixed full ($s=r$) and reduced ($s=r-1$) $H(\Div)$-approximation: Find
$(\u_h,p_h)\in\V_r\times W_s$ such that 
\begin{alignat}3
\label{eq:mixed-darcy-approx}
&(\a^{-1}\u_h,\v_h) - (p_h,\div\v_h) &&=0, &&\quad\forall\v_h\in\V_r,\\
\label{eq:mixed-conservation-approx}
&(\div\u_h,w_h) &&= (f,w_h), &&\quad\forall w_h\in W_s.
\end{alignat}

\begin{theorem}\label{thm:convergenceMixed}
Let $\Omega$ be a convex polygonal domain and let $\cT_h$ be uniformly shape regular.  There
exists a constant $C>0$, independent of $h$, such that
\begin{alignat}2
\|\u-\u_h\| &\le C\|\u\|_{k}h^{k},&&\quad k=1,\ldots,r+1,\\
\|\div(\u-\u_h)\| + \|p-p_h\| &\le C\|\u\|_{k}h^{k},&&\quad k=0,1,\ldots,s+1,
\end{alignat}
where $(\u_h,p_h)$ satisfies \eqref{eq:mixed-darcy-approx}--\eqref{eq:mixed-conservation-approx}.
\end{theorem}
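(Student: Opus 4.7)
The plan is to apply the standard Brezzi theory of mixed finite elements, built directly on top of the Fortin-like projection $\pi$ with its commuting diagram property, the discrete inf-sup condition \eqref{eq:inf-sup}, and the approximation estimates \eqref{eq:approx_u}--\eqref{eq:approx_p}, all already furnished by Lemma~\ref{lem:brambleMixed}. Nothing intrinsically new is required; the argument is the routine reduction from the method's error to projection errors.

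First, I would subtract the discrete system \eqref{eq:mixed-darcy-approx}--\eqref{eq:mixed-conservation-approx} from the continuous system \eqref{eq:mixed-darcy}--\eqref{eq:mixed-conservation} restricted to discrete test functions, obtaining the error equations
\begin{align*}
(\a^{-1}(\u - \u_h), \v_h) - (p - p_h, \div\v_h) &= 0, \quad \forall \v_h \in \V_r, \\
(\div(\u - \u_h), w_h) &= 0, \quad \forall w_h \in W_s.
\end{align*}
Because $\div\V_r\subset W_s$, the second equation implies $\div\u_h = \cP_{W_s}\div\u$, which by the commuting diagram property equals $\div\pi\u$. Hence $\pi\u - \u_h$ is discretely divergence-free, and $\div(\u - \u_h) = (I - \cP_{W_s})\div\u$, whose $L^2$ norm is controlled directly by \eqref{eq:approx_divu}.

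Second, I would test the first error equation with $\v_h = \pi\u - \u_h$, for which $\div\v_h = 0$, so the pressure term drops and
\begin{equation*}
(\a^{-1}(\pi\u - \u_h), \pi\u - \u_h) = (\a^{-1}(\pi\u - \u), \pi\u - \u_h).
\end{equation*}
Uniform ellipticity and boundedness of $\a$, followed by Cauchy--Schwarz, yield $\|\pi\u - \u_h\|_{0,\Omega} \le C\|\u - \pi\u\|_{0,\Omega}$. A triangle inequality combined with the projection estimate \eqref{eq:approx_u} then gives the velocity bound of order $h^{k}$ for $k=1,\ldots,r+1$.

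Third, to control the pressure, I would apply the discrete inf-sup condition \eqref{eq:inf-sup} to $\cP_{W_s}p - p_h \in W_s$. Since $\div\v_h \in W_s$, the numerator $(\cP_{W_s}p - p_h, \div\v_h)$ equals $(p - p_h, \div\v_h) = (\a^{-1}(\u - \u_h), \v_h)$ by the first error equation, bounded by $C\|\u - \u_h\|_{0,\Omega}\|\v_h\|_{H(\Div)}$. Thus $\|\cP_{W_s}p - p_h\|_{0,\Omega} \le C\|\u - \u_h\|_{0,\Omega}$, and the pressure bound follows by triangle inequality against $\|p - \cP_{W_s}p\|_{0,\Omega}$ estimated via \eqref{eq:approx_p}. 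I do not anticipate any genuine obstacle; the essential labor was installing $\pi$ together with the commuting diagram and verifying inf-sup stability in Section~\ref{sec:mixedProperties}, which makes this theorem a direct corollary of Brezzi's abstract framework.
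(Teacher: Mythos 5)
Your argument is correct and is exactly the standard Brezzi/Douglas--Roberts reduction that the paper itself relies on: Theorem~\ref{thm:convergenceMixed} is stated there without an explicit proof, as a direct consequence of the commuting-diagram projection $\pi$, the inf-sup condition, and the approximation estimates of Lemma~\ref{lem:brambleMixed}, which is precisely the machinery you deploy. Your three steps (discrete divergence identity via the commuting diagram, testing with the discretely divergence-free $\pi\u-\u_h$, and the inf-sup bound for $\cP_{W_s}p-p_h$) faithfully reproduce that standard route.
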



\section{Numerical results}\label{sec:numerics}

In this section, we consider the test problem~\eqref{eq:elliptic-pde}--\eqref{eq:elliptic-dirBC}
defined on the unit square $\Omega = [0,1]^2$ with the coefficient $\a$ being the $2\times2$
identity matrix, i.e., we solve the Poisson equation.  The exact solution is
$u(x,y) = \sin(\pi x)\sin(\pi y)$ and the source term is $f(x,y) = 2\pi^2\sin(\pi x)\sin(\pi y)$.

Solutions are computed on three different sequences of meshes.  The first sequence, $\cT_h^1$, is a
uniform mesh of $n^2$ square elements (two sets of parallel edges per element). The second sequence,
$\cT_h^2$, is a mesh of $n^2$ trapezoids of base $h$ and one pair of parallel edges of size $0.75h$
and $1.25h$, as proposed in~\cite{ABF_2002}.  The third sequence, $\cT_h^3$, is chosen so as to have
no pair of edges parallel.  The first $4\times 4$ meshes for each sequence are shown in
Figure~\ref{fig:meshes}.  Finer meshes are constructed by repeating the same pattern over the
domain. Our computer program uses the deal.II library \cite{BDHHKKMTW_2016_dealII84}.

\begin{figure}[ht]
\centerline{
\parbox{.16\linewidth}{\includegraphics[width=\linewidth]{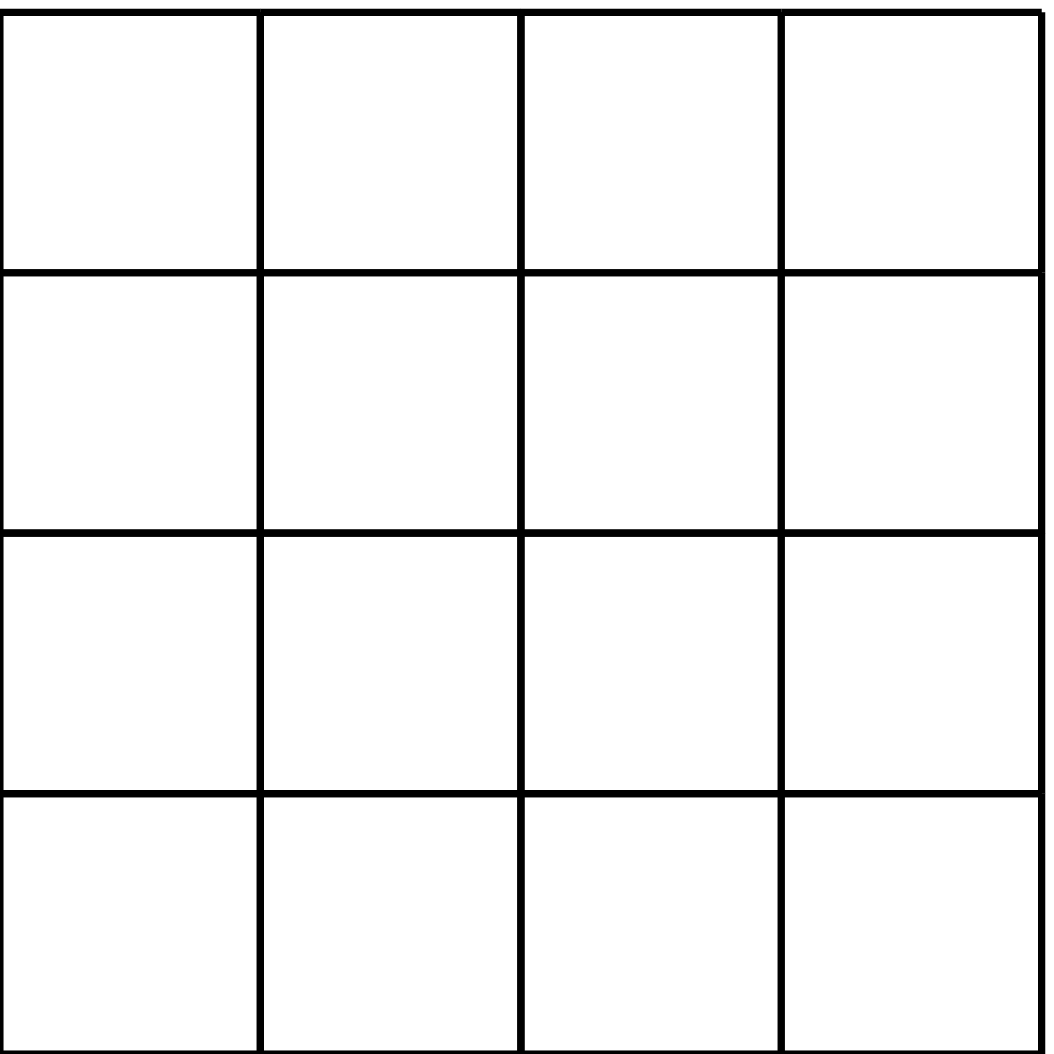}\newline\centerline{$\cT_h^1$}}
\qquad\qquad\parbox{.16\linewidth}{\includegraphics[width=\linewidth]{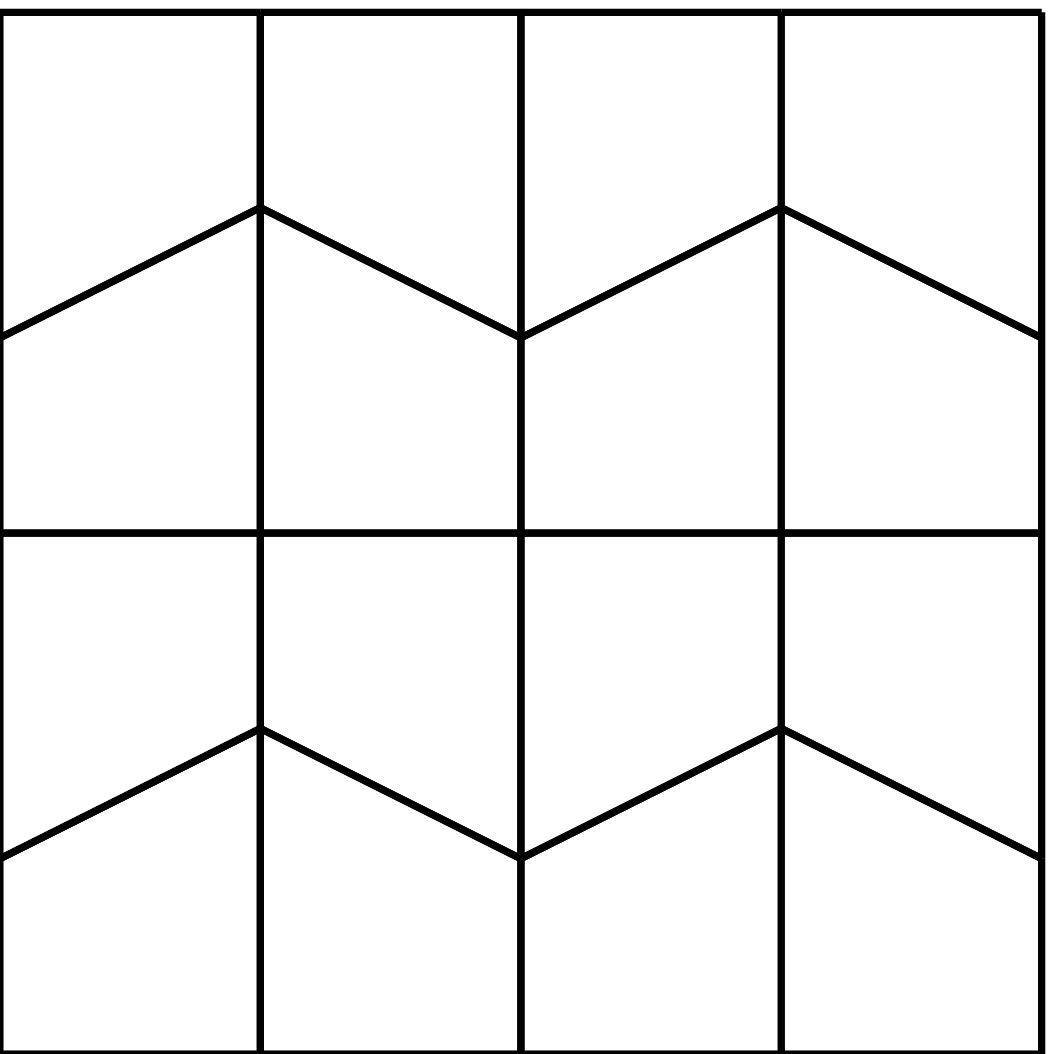}\newline\centerline{$\cT_h^2$}}
\qquad\qquad\parbox{.16\linewidth}{\includegraphics[width=\linewidth]{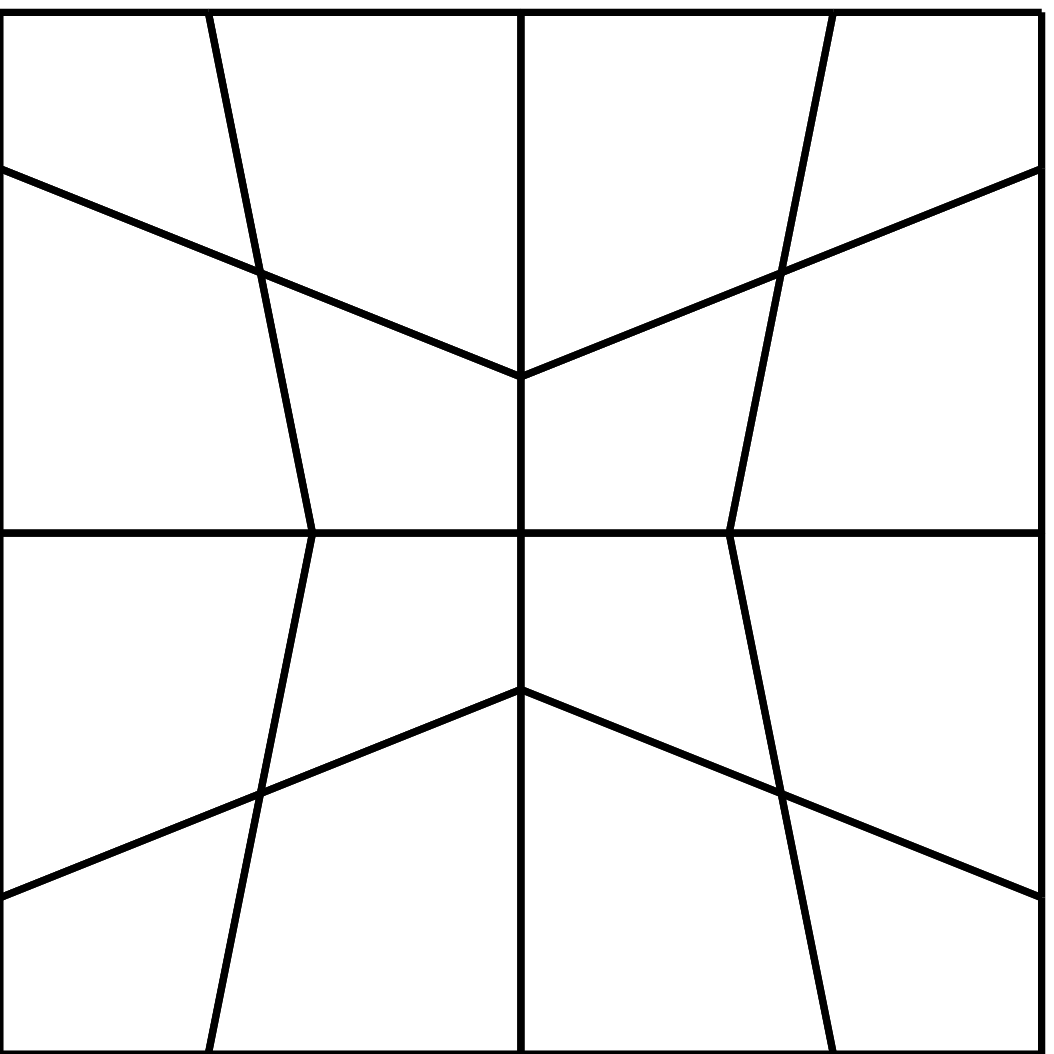}\newline\centerline{$\cT_h^3$}}
}
\caption{The three $4\times4$ base meshes. Finer meshes are constructed by repeating the base mesh
  pattern over the domain.  The meshes have 2, 1, and 0 parallel edges per element,
  respectively.}\label{fig:meshes}
\end{figure}


\subsection{Fully direct serendipity spaces}

In this section we present convergence studies for the fully direct serendipity spaces $\cDS_r$
using the elements defined in~\eqref{eq:shapeSpace}. We compare the results with those of the
regular serendipity spaces $\cS_r$ and the spaces of elements given by mapping the local tensor
product space $\Po_{r,r}(\hat E)$ to the mesh elements (hereafter simply called the $\Po_{r,r}$
space).

As described above, one may need to consider whether opposite faces are parallel to construct a
fully direct serendipity element. We now give a simple choice of element that avoids this
difficulty.  First, we can take \eqref{eq:simpleLambdaHV} for $\lambda_H$ and $\lambda_V$.  Let
$R_V$ and $R_H$ be defined by \eqref{eq:rational-v}--\eqref{eq:rational-h}, where we define
$\nu_H=(\nu_3-\nu_4)/|\nu_3-\nu_4|$ and $\nu_V=(\nu_1-\nu_2)/|\nu_1-\nu_2|$, and set
\begin{equation}\label{def:xi-eta-vh}
\begin{alignedat}2
\xi_V^{-1} &= \sqrt{1-(\nu_H\cdot\nu_1)^2},\qquad
&\eta_V^{-1} &= \sqrt{1-(\nu_H\cdot\nu_3)^2},\\
\xi_H^{-1} &= \sqrt{1-(\nu_V\cdot\nu_2)^2},\qquad
&\eta_H^{-1} &= \sqrt{1-(\nu_V\cdot\nu_4)^2}.
\end{alignedat}
\end{equation}

For an $n\times n$ mesh, the total number of degrees of freedom for $\Po_{r,r}$ is
$(nr+1)^2 = \cO(r^2n^2)$, and for $\cS_r$ and $\cDS_r$ it is
\begin{align*}
\dim(\cS_r) = \dim(\cDS_r) &= (\text{number of vertices}) + (\text{number of edges})(r-1)\\
&\qquad\quad + (\text{number of cells})\tfrac12{(r-2)(r-3)} \\
&=(n+1)^2 + 2n(n+1)(r-1) + n^2 \tfrac12{(r-2)(r-3)}\\
&=\tfrac12(r^2 - r + 4)n^2 + 2rn + 1 = \cO\left(\tfrac12{(r^2-r+4)n^2}\right).
\end{align*}
Therefore, the total number of degrees of freedom for a serendipity space is asymptotically about
half the size of that for a tensor product space of the same order.

We report the $L^2$-errors and the orders of the convergence of the spaces $\Po_{r,r}$, $\cS_r$ and
$\cDS_r$ for $r=2,3,4,5$ on mesh sequence $\cT_h^1$ in Table~\ref{tab:t1-l2}. The errors and
convergence rates in the $H^1$-seminorm are presented in Table~\ref{tab:t1-h1}.  Since $\cT_h^1$ is
a sequence of square meshes, the direct serendipity space $\cDS_r$ and the regular serendipity space
$\cS_r$ coincide on $\cT_h^1$.  All three families of spaces show an $(r+1)$-st order convergence in
the $L^2$-norm and an $r$-th order convergence in the $H^1$-seminorm, as we should expect from
theory.  The errors for $\Po_{r,r}$ are smaller than that for $\cDS_r=\cS_r$, but $\Po_{r,r}$ uses
many more degrees of freedom.

\begin{table}[!ht]
\caption{$L^2$-errors and convergence rates for
$\Po_{r,r}$, $\cDS_r$, and $\cS_r$ spaces on square meshes.}\label{tab:t1-l2}
\begin{center}
\begin{tabular}{c|cc|cc|cc|cc}
\hline
 &  \multicolumn{2}{c|}{$r=2$} & \multicolumn{2}{c|}{$r=3$}
 & \multicolumn{2}{c|}{$r=4$} & \multicolumn{2}{c}{$r=5$} \\
 $n$ & error & rate & error & rate & error & rate & error & rate \\
\hline
\multicolumn{9}{c}{$\myStrut \Po_{r,r}$ on $\cT_h^1$ meshes}\\
\hline
  \08 & 2.451e-04 & 2.99 & 5.564e-06 & 3.99 & 1.054e-07 & 4.99 & 1.688e-09 & 6.00 \\
   12 & 7.282e-05 & 2.99 & 1.101e-06 & 4.00 & 1.389e-08 & 5.00 & 1.483e-10 & 6.00 \\
   16 & 3.075e-05 & 3.00 & 3.486e-07 & 4.00 & 3.298e-09 & 5.00 & 2.640e-11 & 6.00 \\
   24 & 9.116e-06 & 3.00 & 6.890e-08 & 4.00 & 4.344e-10 & 5.00 & 2.420e-12 & 5.89 \\
\hline
\multicolumn{9}{c}{$\myStrut \cS_{r}=\cDS_{r}$ on $\cT_h^1$ meshes}\\
\hline
  \08 & 2.457e-04 & 2.99 & 1.805e-05 & 4.09 & 1.422e-06 & 5.01 & 6.440e-08 & 5.93 \\
   12 & 7.289e-05 & 3.00 & 3.497e-06 & 4.05 & 1.870e-07 & 5.00 & 5.739e-09 & 5.96 \\
   16 & 3.076e-05 & 3.00 & 1.099e-06 & 4.02 & 4.437e-08 & 5.00 & 1.027e-09 & 5.98 \\
   24 & 9.118e-06 & 3.00 & 2.161e-07 & 4.01 & 5.841e-09 & 5.00 & 9.049e-11 & 5.99 \\
\hline
\end{tabular}
\end{center}
\end{table}

\begin{table}[!ht]
\caption{$H^1$-seminorm errors and convergence rates for
$\Po_{r,r}$, $\cDS_r$, and $\cS_r$ spaces on square meshes.}\label{tab:t1-h1}
\begin{center}
\begin{tabular}{c|cc|cc|cc|cc}
\hline
 &  \multicolumn{2}{c|}{$r=2$} & \multicolumn{2}{c|}{$r=3$}
 & \multicolumn{2}{c|}{$r=4$} & \multicolumn{2}{c}{$r=5$} \\
 $n$ & error & rate & error & rate & error & rate & error & rate \\
 \hline
\multicolumn{9}{c}{$\myStrut \Po_{r,r}$ on $\cT_h^1$ meshes}\\
\hline
  \08 & 1.276e-02 & 2.00 & 4.233e-04 & 3.00 & 1.047e-05 & 4.00 & 2.066e-07 & 5.00 \\
   12 & 5.673e-03 & 2.00 & 1.255e-04 & 3.00 & 2.070e-06 & 4.00 & 2.723e-08 & 5.00 \\
   16 & 3.191e-03 & 2.00 & 5.295e-05 & 3.00 & 6.549e-07 & 4.00 & 6.462e-09 & 5.00 \\
   24 & 1.418e-03 & 2.00 & 1.569e-05 & 3.00 & 1.294e-07 & 4.00 & 8.511e-10 & 5.00 \\
\hline
\multicolumn{9}{c}{$\myStrut \cS_{r}=\cDS_{r}$ on $\cT_h^1$ meshes}\\
\hline
  \08 & 1.285e-02 & 2.02 & 1.537e-03 & 3.05 & 1.141e-04 & 3.99 & 5.201e-06 & 4.99 \\
   12 & 5.690e-03 & 2.01 & 4.507e-04 & 3.03 & 2.261e-05 & 3.99 & 6.856e-07 & 5.00 \\
   16 & 3.197e-03 & 2.00 & 1.894e-04 & 3.01 & 7.164e-06 & 4.00 & 1.628e-07 & 5.00 \\
   24 & 1.420e-03 & 2.00 & 5.597e-05 & 3.01 & 1.416e-06 & 4.00 & 2.144e-08 & 5.00 \\
\hline
\end{tabular}
\end{center}
\end{table}

Tables~\ref{tab:t2-l2}--\ref{tab:t2-h1} show the errors (in the $L^2$ and $H^1$-seminorms,
respectively) and the orders of convergence for the trapezoidal mesh sequence $\cT_h^2$.  The tensor
product space $\Po_{r,r}$ achieves the expected optimal convergence rates.  The direct serendipity
space $\cDS_r$ retains an optimal $(r+1)$-st order of convergence in the $L^2$ norm and an optimal
$r$-th order convergence in the $H^1$-seminorm, as Theorem~\ref{thm:convergence} predicts.  The
regular serendipity spaces $\cS_r$ have worse than optimal convergence rates in both norms (as was
also observed in~\cite{ABF_2002}).  The errors and convergence rates for $\Po_{r,r}$, $\cDS_r$, and
$\cS_r$ on mesh sequence $\cT_h^3$ are similar to those on $\cT_h^2$, so we omit showing them.

\begin{table}[ht]
\caption{$L^2$-errors and convergence rates for
$\Po_{r,r}$, $\cDS_r$, and $\cS_r$ spaces on trapezoidal meshes.}\label{tab:t2-l2}
\begin{center}
\begin{tabular}{c|cc|cc|cc|cc}
\hline
 &  \multicolumn{2}{c|}{$r=2$} & \multicolumn{2}{c|}{$r=3$}
 & \multicolumn{2}{c|}{$r=4$} & \multicolumn{2}{c}{$r=5$} \\
 $n$ & error & rate & error & rate & error & rate & error & rate \\
\hline
\multicolumn{9}{c}{$\myStrut\Po_{r,r}$ on $\cT_h^2$ meshes}\\
\hline
  \08 & 3.329e-04 & 2.99 & 9.740e-06 & 3.99 & 2.382e-07 & 4.99 & 5.076e-09 & 5.99 \\
   12 & 9.888e-05 & 2.99 & 1.928e-06 & 3.99 & 3.142e-08 & 5.00 & 4.462e-10 & 6.00 \\
   16 & 4.176e-05 & 3.00 & 6.107e-07 & 4.00 & 7.459e-09 & 5.00 & 7.946e-11 & 6.00 \\
   24 & 1.238e-05 & 3.00 & 1.207e-07 & 4.00 & 9.827e-10 & 5.00 & 6.979e-12 & 6.00 \\
\hline
\multicolumn{9}{c}{$\myStrut \cS_{r}$ on $\cT_h^2$ meshes}\\
\hline
  \08 & 5.714e-04 & 2.92 & 4.844e-04 & 2.89 & 2.612e-05 & 3.72 & 2.005e-06 & 4.13 \\
   12 & 1.731e-04 & 2.94 & 1.482e-04 & 2.92 & 6.084e-06 & 3.59 & 3.884e-07 & 4.05 \\
   16 & 7.409e-05 & 2.95 & 6.383e-05 & 2.93 & 2.265e-06 & 3.43 & 1.234e-07 & 3.99 \\
   24 & 2.254e-05 & 2.94 & 1.963e-05 & 2.91 & 5.984e-07 & 3.28 & 2.516e-08 & 3.92 \\
   32 & 9.799e-06 & 2.90 & 8.635e-06 & 2.85 & 2.408e-07 & 3.16 & 8.342e-09 & 3.84 \\
   64 & 1.440e-06 & 2.70 & 1.332e-06 & 2.61 & 2.862e-08 & 3.05 & 6.644e-10 & 3.56 \\
\hline
\multicolumn{9}{c}{$\myStrut \cDS_{r}$ on $\cT_h^2$ meshes}\\
\hline
  \08 & 3.492e-04 & 3.00 & 3.897e-05 & 4.07 & 2.187e-06 & 5.00 & 8.896e-08 & 5.96 \\
   12 & 1.036e-04 & 3.00 & 7.457e-06 & 4.08 & 2.889e-07 & 4.99 & 7.870e-09 & 5.98 \\
   16 & 4.373e-05 & 3.00 & 2.313e-06 & 4.07 & 6.868e-08 & 4.99 & 1.404e-09 & 5.99 \\
   24 & 1.296e-05 & 3.00 & 4.469e-07 & 4.05 & 9.058e-09 & 5.00 & 1.235e-10 & 6.00 \\
\hline
\end{tabular}
\end{center}
\end{table}

\begin{table}[ht]
\caption{$H^1$-seminorm errors and convergence rates for
$\Po_{r,r}$, $\cDS_r$, and $\cS_r$ spaces on trapezoidal meshes.}\label{tab:t2-h1}
\begin{center}
\begin{tabular}{c|cc|cc|cc|cc}
\hline
 &  \multicolumn{2}{c|}{$r=2$} & \multicolumn{2}{c|}{$r=3$}
 & \multicolumn{2}{c|}{$r=4$} & \multicolumn{2}{c}{$r=5$} \\
 $n$ & error & rate & error & rate & error & rate & error & rate \\
\hline
\multicolumn{9}{c}{$\myStrut\Po_{r,r}$ on $\cT_h^2$ meshes}\\
\hline
  \08 & 1.734e-02 & 2.00 & 7.206e-04 & 2.99 & 2.310e-05 & 3.99 & 6.083e-07 & 4.99 \\
   12 & 7.710e-03 & 2.00 & 2.139e-04 & 3.00 & 4.570e-06 & 4.00 & 8.021e-08 & 5.00 \\
   16 & 4.337e-03 & 2.00 & 9.027e-05 & 3.00 & 1.447e-06 & 4.00 & 1.904e-08 & 5.00 \\
   24 & 1.928e-03 & 2.00 & 2.676e-05 & 3.00 & 2.859e-07 & 4.00 & 2.509e-09 & 5.00 \\
\hline
\multicolumn{9}{c}{$\myStrut \cS_{r}$ on $\cT_h^2$ meshes}\\
\hline
  \08 & 2.413e-02 & 1.94 & 1.834e-02 & 1.90 & 1.818e-03 & 2.65 & 1.537e-04 & 3.18 \\
   12 & 1.105e-02 & 1.93 & 8.572e-03 & 1.88 & 6.582e-04 & 2.51 & 4.483e-05 & 3.04 \\
   16 & 6.432e-03 & 1.88 & 5.091e-03 & 1.81 & 3.345e-04 & 2.35 & 1.945e-05 & 2.90 \\
   24 & 3.104e-03 & 1.80 & 2.560e-03 & 1.70 & 1.360e-04 & 2.22 & 6.370e-06 & 2.75 \\
   32 & 1.920e-03 & 1.67 & 1.643e-03 & 1.54 & 7.378e-05 & 2.12 & 3.029e-06 & 2.58 \\
   64 & 7.097e-04 & 1.34 & 6.602e-04 & 1.23 & 1.776e-05 & 2.03 & 5.953e-07 & 2.26 \\
\hline
\multicolumn{9}{c}{$\myStrut \cDS_{r}$ on $\cT_h^2$ meshes}\\
\hline
  \08 & 1.836e-02 & 2.01 & 2.517e-03 & 3.02 & 1.625e-04 & 3.99 & 7.384e-06 & 4.99 \\
   12 & 8.143e-03 & 2.00 & 7.400e-04 & 3.02 & 3.216e-05 & 4.00 & 9.757e-07 & 4.99 \\
   16 & 4.577e-03 & 2.00 & 3.109e-04 & 3.01 & 1.018e-05 & 4.00 & 2.318e-07 & 5.00 \\
   24 & 2.033e-03 & 2.00 & 9.170e-05 & 3.01 & 2.012e-06 & 4.00 & 3.056e-08 & 5.00 \\
\hline
\end{tabular}
\end{center}
\end{table}

We remark that the time cost for the assembly routine can be scaled nearly perfectly in parallel,
since it basically involves only local computations. Therefore, reducing the global number of
degrees of freedom in a serendipity space versus a tensor product space, even perhaps at the expense
of a slightly more expensive assembly, is worthwhile~\cite{Tao_2017_phd,
  Arbogast_Tao_2017_serendipity}.


\subsection{Fully direct mixed finite elements on quadrilaterals}

In this section, we verify the convergence rate for the new fully direct mixed finite elements
derived in Section~\ref{sec:newMixed}. These are implemented without the use of any mapping from the
reference element. We take $\xi_V=\xi_H=\eta_V=\eta_H=1$, although taking the values in
\eqref{def:xi-eta-vh} provides similar results. We apply the hybrid form of the the mixed finite
element method~\cite{Arnold_Brezzi_1985}.  The errors and the orders of convergence for the reduced
and full $H(\Div)$-approximation spaces when $r=1,2$ on mesh $\cT_h^2$ are presented in
Table~\ref{tab:at1-h2}.  Again, results are similar on $\cT_h^3$ meshes. As the theory predicts, the scalar $p$, the vector
$\u$, and the divergence $\div\u$ retain $r$-th, $(r+1)$-st, and $r$-th order approximation,
respectively, for the reduced $H(\Div)$-approximation spaces, and all three quantities show $r$-th order
approximation for the full $H(\Div)$-approximation spaces.

\begin{table}[ht]
\caption{Errors and convergence rates for
fully direct mixed spaces on trapezoidal meshes $\cT_h^2$.}\label{tab:at1-h2}
\begin{center}
\begin{tabular}{c|cc|cc|cc}
\hline
 &  \multicolumn{2}{c|}{$||p-p_h||$} & \multicolumn{2}{c|}{$\|\u-\u_h\|$}
 & \multicolumn{2}{c}{$\|\div(\u-\u_h)\|$}\\
 $n$ & error & rate & error & rate & error & rate\\
\hline
\multicolumn{7}{c}{$r=1$, reduced $H(\Div)$-approximation}\\
\hline
  \04 & 1.670e-01 &    --- & 2.609e-01 &    --- & 3.163e-00 &    --- \\ 
  \08 & 8.271e-02 & 1.01 & 6.803e-02 & 1.96 & 1.612e-00 & 0.98 \\ 
   16 & 4.117e-02 & 1.00 & 1.719e-02 & 1.99 & 8.099e-01 & 1.00 \\ 
   32 & 2.056e-02 & 1.00 & 4.309e-03 & 2.00 & 4.054e-01 & 1.00 \\ 
\hline
\multicolumn{7}{c}{$r=2$, reduced $H(\Div)$-approximation}\\
\hline
  \04 & 3.079e-02 &    --- & 2.319e-02 &    --- & 6.067e-01 &    --- \\ 
  \08 & 7.847e-03 & 1.98 & 2.906e-03 & 3.00 & 1.549e-01 & 1.98 \\ 
   16 & 1.972e-03 & 2.00 & 3.633e-04 & 3.00 & 3.892e-02 & 2.00 \\ 
   32 & 4.936e-04 & 2.00 & 4.543e-05 & 3.00 & 9.742e-03 & 2.00 \\ 
\hline
\multicolumn{7}{c}{$r=1$, full $H(\Div)$-approximation}\\
\hline
  \04 & 3.079e-02 &    - & 5.562e-02 &    - & 6.067e-01 &    - \\ 
  \08 & 7.847e-03 & 1.98 & 1.350e-02 & 2.02 & 1.549e-01 & 1.98 \\ 
   16 & 1.972e-03 & 2.00 & 3.355e-03 & 2.01 & 3.892e-02 & 2.00 \\ 
   32 & 4.936e-04 & 2.00 & 8.378e-04 & 2.00 & 9.742e-03 & 2.00 \\ 
\hline
\multicolumn{7}{c}{$r=2$, full $H(\Div)$-approximation}\\
\hline
  \04 & 4.081e-03 &    - & 7.198e-03 &    - & 8.050e-02 &    - \\ 
  \08 & 5.201e-04 & 2.98 & 9.105e-04 & 2.99 & 1.026e-02 & 2.98 \\ 
   16 & 6.533e-05 & 3.00 & 1.141e-04 & 3.00 & 1.289e-03 & 3.00 \\ 
   32 & 8.176e-06 & 3.00 & 1.428e-05 & 3.00 & 1.614e-04 & 3.00 \\ 
\hline
\end{tabular}
\end{center}
\end{table}

Our numerical test agrees with that taken in~\cite{Arbogast_Correa_2016}, where results for the
full and reduced AC spaces and the mapped BDM spaces appear.  Results for our fully direct mixed
spaces agree very closely with the results for the AC spaces, and these far exceed the
performance of the mapped BDM spaces.


\subsection{Serendipity space based on mapped supplements}

In this section, we present the errors and convergence rates for the serendipity spaces
$\cDS_r^{\text{map}}$ using elements defined in \eqref{eq:DSmap}, which has supplements mapped from
the reference element. The results for $r=2,3,4$ on mesh $\cT_h^2$ are shown in
Table~\ref{tab:ds-mapped}.  As predicted by the theory, this new family of spaces shows an
$(r+1)$-st order convergence in the $L^2$-norm and an $r$-th order convergence in the
$H^1$-seminorm.  The results compare favorably with those for the fully direct spaces in
Tables~\ref{tab:t2-l2}--\ref{tab:t2-h1}, although the latter are perhaps slightly better.

\begin{table}[ht]
\caption{Errors and convergence rates for
$\cDS_r^{\textrm{map}}$ spaces on trapezoidal meshes $\cT_h^2$.}\label{tab:ds-mapped}
\begin{center}
\begin{tabular}{c|cc|cc|cc|cc}
\hline
 &  \multicolumn{2}{c|}{$r=2$} & \multicolumn{2}{c|}{$r=3$}
 & \multicolumn{2}{c|}{$r=4$} & \multicolumn{2}{c}{$r=5$}\\
 $n$ & error & rate & error & rate & error & rate & error & rate \\
\hline
\multicolumn{9}{c}{$\myStrut L^2$-errors and convergence rates}\\
\hline
  \08 & 5.737e-04 & 2.92 & 4.128e-05 & 4.09 & 2.344e-06 & 5.04 & 9.134e-08 & 6.00 \\
   12 & 1.727e-04 & 2.96 & 7.968e-06 & 4.06 & 3.048e-07 & 5.03 & 8.023e-09 & 6.00 \\
   16 & 7.329e-05 & 2.98 & 2.493e-06 & 4.04 & 7.182e-08 & 5.03 & 1.428e-09 & 6.00 \\
   24 & 2.180e-05 & 2.99 & 4.869e-07 & 4.03 & 9.380e-09 & 5.02 & 1.252e-10 & 6.00 \\
\hline
\multicolumn{9}{c}{$\myStrut H^1$-seminorm errors and convergence rates}\\
\hline
  \08 & 2.410e-02 & 1.99 & 2.851e-03 & 3.05 & 1.730e-04 & 4.03 & 7.609e-06 & 5.01 \\
   12 & 1.074e-02 & 1.99 & 8.333e-04 & 3.03 & 3.385e-05 & 4.02 & 9.979e-07 & 5.01 \\
   16 & 6.047e-03 & 2.00 & 3.491e-04 & 3.02 & 1.065e-05 & 4.02 & 2.362e-07 & 5.01 \\
   24 & 2.690e-03 & 2.00 & 1.027e-04 & 3.02 & 2.091e-06 & 4.02 & 3.102e-08 & 5.01 \\
\hline
\end{tabular}
\end{center}
\end{table}


\section{Summary and Conclusions}\label{sec:conc}

It is possible to define a wide variety of direct serendipity elements on a nondegenerate, convex
quadrilateral $E$.  Most or perhaps all of these elements appear to be new, and they have the form
\begin{equation}\label{eq:serendipityFormConc}
\cDS_r(E) = \Po_r(E)\oplus\Supp_r^\cDS(E),\quad r\geq2.
\end{equation}
The supplemental space $\Supp_r^\cDS(E)$ can be defined by four functions. Referring to
Figure~\ref{fig:numbering}, the linear functions $\lambda_H$ and $\lambda_V$ are arbitrary except
that the zero line of $\lambda_H$ must intersect the lines containing $e_1$ and $e_2$ above or below
the intersection point $\x_{12}$, if it exists, and $\lambda_V$ must intersect the lines containing
$e_3$ and $e_4$ to the left or right of the intersection point $\x_{34}$, if it exists.  The
bounded, (most likely) nonlinear functions $R_V$ and $R_H$ can be chosen arbitrarily as long as they
are negative constants on $e_1$ and $e_3$, respectively, and positive constants on $e_2$ and $e_4$,
respectively.  For example, one can take the simple choices
\begin{equation}
\lambda_H = \lambda_3 - \lambda_4,\quad R_V = \frac{\lambda_1 - \lambda_2}{\lambda_1 + \lambda_2},\quad
\lambda_V = \lambda_1 - \lambda_2,\quad R_H = \frac{\lambda_3 - \lambda_4}{\lambda_3 + \lambda_4},
\end{equation}
or the choices given in Lemma~\ref{lem:equ-space}, in which case the explicit basis
\eqref{eq:ds-baseE1}--\eqref{eq:ds-baseV13} can be constructed easily. The fully direct supplemental
space is
\begin{equation}
\label{eq:supplementSpaceConc}
\Supp_r^{\cDS}(E)
= \spn\{\lambda_3\lambda_4\lambda_H^{r-2}R_V,\lambda_1\lambda_2\lambda_V^{r-2}R_H\},
\end{equation}
but a supplemental space can also be defined using the bilinear map between $\hat E=[-1,1]^2$ and $E$ as
\begin{equation}
\label{eq:supplementSpaceConcMapped}
\Supp_{r}^{\cDS,\text{map}}(E) = \spn\big\{F_{\!E}^0\big((1-\hat x_2^2)\hat x_1\hat x_2^{r-2}\big),
                                                F_{\!E}^0\big((1-\hat x_1^2)\hat x_2\hat x_1^{r-2}\big)\big\}.
\end{equation}

It is possible to define a wide variety of direct mixed elements on $E$. The de Rham theory is
useful in this regard, and the elements take the form
\begin{align}
\V_r^{\textrm{red}}(E) &= \Curl\,\cDS_{r+1}(E)\oplus\x\Po_{r-1}(E) = \Po_r^2(E)\oplus\Supp_r^\V(E)\quad r\geq1,\\
\V_r^{\textrm{full}}(E) &= \V_r^{\textrm{red}}(E)\oplus\x\tilde\Po_r(E),\quad r\geq1,
\end{align}
for reduced and full $H(\Div)$-approximation spaces, where
\begin{equation}
\Supp_r^\V(E) = \Curl\,\Supp_r^\cDS(E).
\end{equation}
If \eqref{eq:supplementSpaceConcMapped} is used, the AC spaces \cite{Arbogast_Correa_2016} result.
Otherwise, the elements appear to be new, and they are the first families of fully direct mixed
spaces defined on quadrilaterals.

The direct serendipity and mixed elements can be merged to create $H^1(\Omega)$ and $H(\Div;\Omega)$
conforming spaces, respectively, on a mesh $\cT_h$ of nondegenerate, convex quadrilaterals.  If the
meshes are shape regular as $h\to0$, the spaces have both optimal approximation properties and
minimal local dimension.  Numerical results were presented to illustrate their performance.

We close with a simple observation. Another well-known de Rham complex is
\begin{equation}\label{eq:deRhamCurl}
\Re \hooklongrightarrow H^1 \overset{\Grad\,}{\longlongrightarrow}
H(\Curl) \overset{\Curl\,}{\longlongrightarrow} L^2 \longrightarrow 0,
\end{equation}
where the curl of a vector function $\bfpsi=\big(\psi_1,\psi_2\big)$ is the scalar
$\Curl\,\bfpsi(\x) = \dfrac{\d\psi_1}{\d x_2} - \dfrac{\d\psi_2}{\d x_1}$.  This complex is
essentially just a rotation of \eqref{eq:deRham}.  It gives us full ($s=r$) and reduced ($s=r-1$)
direct $H(\Curl)$-approximating elements
\begin{equation}
\label{eq:generalHCurl}
\V_{r,\Curl}(E) = \grad\,\cDS_{r+1}(E)\oplus(x_2,-x_1)\Po_{s}.
\end{equation}
These then satisfy the de Rham complexes
\begin{align}
\label{eq:deRhamVCurl}
\Re \hooklongrightarrow \cDS_{r+1}(E) \overset{\Grad\,}{\longlongrightarrow}
 &\V_{r,\Curl}^{\textrm{full}}(E) \overset{\Curl\,}{\longlongrightarrow} \Po_{r}(E) \longrightarrow 0,\\
\label{eq:deRhamVredCurl}
\Re \hooklongrightarrow \cDS_{r+1}(E) \overset{\Grad\,}{\longlongrightarrow}
 &\V_{r,\Curl}^{\textrm{red}}(E) \overset{\Curl\,}{\longlongrightarrow} \Po_{r-1}(E) \longrightarrow 0,
\end{align}
for any variant of our new direct serendipity spaces.  We can merge these elements globally, since
tangential derivatives of $\phi$, say, map to tangential components of $\grad\phi$, which are both
$\grad\phi\cdot\tau$. So we have full and reduced direct $H(\Curl)$-approximating spaces over meshes
of quadrilaterals.



\begin{thebibliography}{10}

\bibitem{Arbogast_Correa_2016}
{\sc T.~Arbogast and M.~R. Correa}, {\em Two families of {H}(div) mixed finite
  elements on quadrilaterals of minimal dimension}, SIAM J. Numer. Anal., 54
  (2016), pp.~3332--3356.
\newblock {DOI} 10.1137/15M1013705.

\bibitem{Arbogast_Tao_2017_serendipity}
{\sc T.~Arbogast and Z.~Tao}, {\em Direct serendipity finite elements on convex
  quadrilaterals}, Tech. Rep. ICES REPORT 17-28, Institute for Computational
  Engineering and Sciences, Univ.\ of Texas at Austin, October 2017.

\bibitem{Arnold_2013}
{\sc D.~N. Arnold}, {\em Spaces of finite element differential forms}, in
  Analysis and numerics of partial differential equations, Springer, 2013,
  pp.~117--140.

\bibitem{Arnold_Awanou_2011}
{\sc D.~N. Arnold and G.~Awanou}, {\em The serendipity family of finite
  elements}, Foundations of Computational Mathematics, 11 (2011), pp.~337--344.

\bibitem{Arnold_Awanou_2014}
\leavevmode\vrule height 2pt depth -1.6pt width 23pt, {\em Finite element
  differential forms on cubical meshes}, Math. Comp., 83 (2014),
  pp.~1551--1570.

\bibitem{ABF_2002}
{\sc D.~N. Arnold, D.~Boffi, and R.~S. Falk}, {\em Approximation by
  quadrilateral finite elements}, Math. Comp., 71 (2002), pp.~909--922.

\bibitem{ABF_2005}
\leavevmode\vrule height 2pt depth -1.6pt width 23pt, {\em Quadrilateral
  {H}(div) finite elements}, SIAM. J. Numer. Anal., 42 (2005), pp.~2429--2451.

\bibitem{Arnold_Brezzi_1985}
{\sc D.~N. Arnold and F.~Brezzi}, {\em Mixed and nonconforming finite element
  methods: {I}mplementation, postprocessing and error estimates}, RAIRO
  Mod\'el. Math. Anal. Num\'er., 19 (1985), pp.~7--32.

\bibitem{Arnold_Falk_Winter_2006}
{\sc D.~N. Arnold, R.~S. Falk, and R.~Winther}, {\em Finite element exterior
  calculus, homological techniques, and applications}, Acta numerica, 15
  (2006), pp.~1--155.

\bibitem{AFW_2010_feec}
{\sc D.~N. Arnold, R.~S. Falk, and R.~Winther}, {\em Finite element exterior
  calculus: from {H}odge theory to numerical stability}, Bull. Amer. Math. Soc.
  (N.S.), 47 (2010), pp.~281--354.

\bibitem{Arnold_Logg_2014_periodicTable}
{\sc D.~N. Arnold and A.~Logg}, {\em Periodic table of the finite elements},
  SIAM News, 47 (2014).
                
\bibitem{BDHHKKMTW_2016_dealII84}
{\sc W.~Bangerth, D.~Davydov, T.~Heister, L.~Heltai, G.~Kanschat,
  M.~Kronbichler, M.~Maier, B.~Turcksin, and D.~Wells}, {\em The
  \texttt{deal.II} library, version 8.4}, J. Numerical Math., 24 (2016),
  pp.~135--141.

\bibitem{Bochev_Ridzal_2008}
{\sc P.~B. Bochev and D.~Ridzal}, {\em Rehabilitation of the lowest-order
  {R}aviart--{T}homas element on quadrilateral grids}, SIAM J. Numer. Anal., 47
  (2008), pp.~487--507.

\bibitem{Boffi_Brezzi_Fortin_2013}
{\sc D.~Boffi, F.~Brezzi, and M.~Fortin}, {\em Mixed finite element methods and
  applications}, no.~44 in Springer Series in Computational Mathematics,
  Springer, Heidelberg, 2013.

\bibitem{Boffi_Kikuchi_Schoberl_2006}
{\sc D.~Boffi, F.~Kikuchi, and J.~Sch{\"o}berl}, {\em Edge element computation
  of {M}axwell's eigenvalues on general quadrilateral meshes}, Mathematical
  Models and Methods in Applied Sciences (M3AS), 16 (2006), pp.~265--273.

\bibitem{Bramble_Hilbert_1970}
{\sc J.~H. Bramble and S.~R. Hilbert}, {\em Estimation of linear functionals on
  {S}obolev spaces with applications to {F}ourier transforms and spline
  interpolation}, SIAM J. Numer. Anal., 7 (1970), pp.~112--124.

\bibitem{Brenner_Scott_1994}
{\sc S.~C. Brenner and L.~R. Scott}, {\em The Mathematical Theory of Finite
  Element Methods}, Springer-Verlag, New York, 1994.

\bibitem{BDM_1985}
{\sc F.~Brezzi, J.~Douglas, Jr., and L.~D. Marini}, {\em Two families of mixed
  elements for second order elliptic problems}, Numer. Math., 47 (1985),
  pp.~217--235.

\bibitem{Brezzi_Fortin_1991}
{\sc F.~Brezzi and M.~Fortin}, {\em Mixed and hybrid finite element methods},
  Springer-Verlag, New York, 1991.

\bibitem{Ciarlet_1978}
{\sc P.~G. Ciarlet}, {\em The Finite Element Method for Elliptic Problems},
  North-Holland, Amsterdam, 1978.

\bibitem{Douglas_Roberts_1985}
{\sc J.~Douglas, Jr. and J.~E. Roberts}, {\em Global estimates for mixed
  methods for second order elliptic equations}, Math. Comp., 44 (1985),
  pp.~39--52.

\bibitem{Duan_Liang_2004}
{\sc H.-Y. Duan and G.-P. Liang}, {\em Nonconforming elements in least-squares
  mixed finite element methods}, Math. Comp., 73 (2004), pp.~1--18.

\bibitem{DHJKLLS_2003_fenics}
{\sc T.~Dupont, J.~Hoffman, C.~Johnson, R.~C. Kirby, M.~G. Larson, A.~Logg, and
  L.~R. Scott}, {\em The fenics project}, Tech. Rep. 2003--21, Chalmers Finite
  Element Center, Chalmers University of Technology, Goteborg, Sweden, 2003.

\bibitem{Dupont_Scott_1980}
{\sc T.~Dupont and L.~R. Scott}, {\em Polynomial approximation of functions in
  {S}obolev space}, Math.\ Comp., 34 (1980), pp.~441--463.

\bibitem{Floater_Lai_2016}
{\sc M.~S. Floater and M.-J. Lai}, {\em Polygonal spline spaces and the
  numerical solution of the poisson equation}, SIAM J. Numer. Anal., 54 (2016),
  pp.~797--824.

\bibitem{Gillette_Kloefkorn_2018_trimmed}
{\sc A.~Gillette and T.~Kloefkorn}, {\em Trimmed serendipity finite element
  differential forms}, Math. Comp.,  (2018).

\bibitem{Girault_Raviart_1986}
{\sc V.~Girault and P.~A. Raviart}, {\em Finite Element Methods for
  {N}avier-{S}tokes Equations: {T}heory and Algorithms}, Springer-Verlag,
  Berlin, 1986.

\bibitem{Grisvard_1985}
{\sc P.~Grisvard}, {\em Elliptic Problems in Nonsmooth Domains}, Pitman,
  Boston, 1985.

\bibitem{Hibbitt_Karlsson_Sorensen_2001_abaqus}
{\sc I.~Hibbitt, Karlsson \&~Sorensen}, {\em ABAQUS/Standard User's Manual},
  2001.

\bibitem{Kaliakin_2001}
{\sc V.~N. Kaliakin}, {\em Introduction to approximate solution techniques,
  numerical modeling, and finite element methods}, CRC Press, 2001.

\bibitem{Kwak_Pyo_2011}
{\sc D.~Y. Kwak and H.~C. Pyo}, {\em Mixed finite element methods for general
  quadrilateral grids}, Applied Mathematics and Computation, 217 (2011),
  pp.~6556--6565.

\bibitem{Lee_Bathe_1993}
{\sc N.-S. Lee and K.-J. Bathe}, {\em Effects of element distortions on the
  performance of isoparametric elements}, Intl. J. Numer. Meth. Engineering, 36
  (1993), pp.~3553--3576.

\bibitem{Nedelec_1980}
{\sc J.~C. N\'ed\'elec}, {\em Mixed finite elements in {$\mbox{\bf R}^3$}},
  Numer. Math., 35 (1980), pp.~315--341.

\bibitem{Rand_Gillette_Bajaj_2014}
{\sc A.~Rand, A.~Gillette, and C.~Bajaj}, {\em Quadratic serendipity finite
  elements on polygons using generalized barycentric coordinates}, Math. Comp.,
  83 (2014), pp.~2691--2716.

\bibitem{Raviart_Thomas_1977}
{\sc R.~A. Raviart and J.~M. Thomas}, {\em A mixed finite element method for
  2nd order elliptic problems}, in Mathematical Aspects of Finite Element
  Methods, I.~Galligani and E.~Magenes, eds., no.~606 in Lecture Notes in
  Math., Springer-Verlag, New York, 1977, pp.~292--315.

\bibitem{Shen_1992_phd}
{\sc J.~Shen}, {\em Mixed Finite Element Methods: Analysis and Computational
  Aspects}, PhD thesis, University of Wyoming, 1992.

\bibitem{Shen_1994}
\leavevmode\vrule height 2pt depth -1.6pt width 23pt, {\em Mixed finite element
  methods on distorted rectangular grids}, Tech. Rep. ISC-94-13-MATH, Institute
  for Scientific Computation, Texas A\&M University, College Station, Texas,
  1994.

\bibitem{Siqueira_Devloo_Gomes_2013}
{\sc D.~Siqueira, P.~R.~B. Devloo, and S.~M. Gomes}, {\em A new procedure for
  the construction of hierarchical high order hdiv and hcurl finite element
  spaces}, J. Computational and App. Math., 240 (2013), pp.~204--214.

\bibitem{Strang_Fix_1973}
{\sc G.~Strang and G.~J. Fix}, {\em An analysis of the finite element method},
  Series in Automatic Computation, Prentice-Hall, Englewood Cliffs, New Jersey,
  1973.

\bibitem{Tao_2017_phd}
{\sc Z.~Tao}, {\em Numerical Analysis of Multiphase flows in Porous Media on
  Non-Rectangular Geometry}, PhD thesis, University of Texas at Austin,
  December 2017.

\bibitem{Thomas_1977}
{\sc J.~M. Thomas}, {\em Sur l'analyse numerique des methodes d'elements finis
  hybrides et mixtes}, PhD thesis, Sciences Mathematiques, \`{a} l'Universite
  Pierre et Marie Curie, 1977.

\end{thebibliography}

\end{document}